\numberwithin{equation}{section}
\theoremstyle{plain}
\newtheorem{thm}[equation]{Theorem}
\newtheorem{prop}[equation]{Proposition}
\newtheorem{coro}[equation]{Corollary}
\newtheorem{lem}[equation]{Lemma}
\newcommand{\comment}[1]{}
\theoremstyle{definition}
\newtheorem{defi}[equation]{Definition}
\theoremstyle{remark}
\newtheorem{rem}[equation]{Remark}
\theoremstyle{definition}
\numberwithin{equation}{section}
\newcommand{\sch}{\mathbf{Sch}_k}
\newcommand{\spt}{\mathbf{Spt}}
\newcommand{\ass}{\mathbf{Rng}}
\newcommand{\dgcatq}{\mathbf{dgCat}_{\Q}}
\newcommand{\dgcat}{\mathbf{dgCat}_k}
\newcommand{\grpd}{\mathbf{Grpd}}
\newcommand{\Or}{\mathbf{Or}}
\newcommand{\sset}{\mathbf{SSet}}
\newcommand{\sets}{\mathbf{Sets}}
\newcommand{\weq}{\overset{\sim}\longrightarrow}
\def\fibeq{\overset\sim\twoheadrightarrow}
\def\cat{\mathbf{Cat}}
\newcommand{\dgm}{\mathbf{C}}
\newcommand{\mdg}{\mathbf{C_{\mathrm{dg}}}}
\newcommand{\perf}{\mathrm{Perf}}
\newcommand{\Dperf}{\mathbf{D}_{\mathrm{perf}}}
\newcommand{\Perf}{\mathbf{Perf}}
\newcommand{\ac}{\mathrm{Ac}}
\newcommand{\cHom}{\mathcal{H}\hspace{-0.1em}\mathit{om}}
\newcommand{\cal}[1]{\mathcal{#1}}
\newcommand{\Ho}{\mathrm{Ho}}
\newcommand{\otk}{\otimes_k}
\newcommand{\otq}{\otimes_{\Q}}
\newcommand{\Oo}{\mathcal{O}}
\newcommand{\Grpd}{\mathcal{G}}
\newcommand{\Fam}{\mathcal{F}}
\newcommand{\Fib}{\mathscr{F}}
\newcommand{\fS}{\mathfrak{S}}
\newcommand{\fT}{\mathfrak{T}}
\newcommand{\car}{\mathrm{char}}
\def\cA{\cal A}
\def\cG{\cal G}
\def\cE{\cal E}
\def\cF{\cal F}
\def\cT{\cal T}
\def\fC{\mathfrak{C}}
\newcommand{\op}{\mathrm{op}}
\newcommand{\ob}{\mathrm{ob}}
\newcommand{\red}{\mathrm{red}}
\newcommand{\zar}{\mathrm{Zar}}
\newcommand{\ch}{\mathrm{ch}}
\newcommand{\cdh}{\mathrm{cdh}}
\newcommand{\aff}{\mathrm{aff}}
\renewcommand{\inf}{\mathrm{inf}}
\newcommand{\map}{\mathrm{map}}
\newcommand{\CYC}{\mathcal{C}yc}
\newcommand{\cyc}{\CYC}
\newcommand{\vcyc}{\cal{V}cyc}
\newcommand{\Z}{\mathbb{Z}}
\newcommand{\Q}{\mathbb{Q}}
\DeclareMathOperator*{\hocolim}{hocolim}
\DeclareMathOperator{\hofiber}{hofiber}
\newcommand{\hofi}{\hofiber}
\DeclareMathOperator{\Spec}{Spec}
\DeclareMathOperator{\pretr}{\mathrm{pretr}}
\begin{document}
\title{Singular coefficients in the $K$-theoretic Farrell-Jones conjecture}
\author{Guillermo Corti\~nas}
\email{gcorti@dm.uba.ar}\urladdr{http://mate.dm.uba.ar/\~{}gcorti}
\author{Emanuel Rodr\'\i guez Cirone}
\email{ercirone@dm.uba.ar}
\address{Dep. Matem\'atica-IMAS, FCEyN-UBA\\ Ciudad Universitaria Pab 1\\
1428 Buenos Aires\\ Argentina}
\thanks{The first author was partially supported by MTM2012-36917-C03-02. Both authors
were supported by CONICET, and partially supported by
grants UBACyT 20020100100386 and PIP 11220110100800.}
\begin{abstract}
Let $G$ be a group and let $k$ be a field of characteristic zero. We prove that if the Farrell-Jones conjecture for the $K$-theory of $R[G]$ is satisfied for every smooth $k$-algebra $R$, then it is also satisfied for every commutative $k$-algebra $R$.
\end{abstract}
\maketitle

\section{Introduction}

Let $G$ be a group; a \emph{family} of subgroups of $G$ is a
nonempty family $\cF$ closed under conjugation and under taking
subgroups. A \emph{$G$-space} is a simplicial set together with a $G$-action. 
If $\cF$ is a family of subgroups of $G$ and $f:X\to Y$ is an equivariant
map of $G$-spaces, then we say that $f$ is an \emph{$\cF$-equivalence} (resp. an \emph{$\cF$-fibration}) if the map between fixed
point sets
\[
f:X^H\to Y^H
\]
is a weak equivalence (resp. a fibration) for every $H\in \cF$. A
$G$-space $X$ is called a {\em $(G,\cF)$-complex} if the
stabilizer of every simplex of $X$ is in $\cF$. The category of
$G$-spaces can be equipped with a closed model structure
where the weak equivalences (resp. the fibrations) are the $\cF$-equivalences (resp. the $\cF$-fibrations), 
(see \cite{corel}*{\S1}). The $(G,\cF)$-complexes
are the cofibrant objects in this model structure. By a general
construction of Davis and L\"uck (see \cite{dl}) any functor $E$
from the category $\Z$-$\cat$ of small $\Z$-linear categories to the
category $\spt$ of spectra which sends category equivalences to
weak equivalences of spectra gives rise to an equivariant homology theory
of $G$-spaces $X\mapsto H^G(X,E(R))$ for each unital ring $R$. If $H\subset G$ is a subgroup, 
then
\begin{equation}\label{intro:cross}
H_*^G(G/H,E(R))=E_*(R[H])
\end{equation}
is just $E_*$ evaluated at the group ring.
The \emph{strong isomorphism conjecture} for the cuadruple $(G,\cF,E,R)$
asserts that if $f:X\to Y$ is an $\cF$-equivalence, then the induced map
$H^G_*(f,E(R))$ is an isomorphism. The \emph{isomorphism
conjecture} for the quadruple $(G,\cF,E,R)$ asserts that if
$\cE(G,\cF)\fibeq pt$ is a $(G,\cF)$-cofibrant replacement of the
point, then the induced map
\begin{equation}\label{intro:assem}
H_*^G(\cE(G,\cF),E(R))\to E_*(R[G])
\end{equation}
--called \emph{assembly map}-- is an isomorphism. 

A group is called \emph{virtually cyclic} if it contains a cyclic group of finite index. The $K$-theoretic \emph{Farrell-Jones conjecture} for a group $G$ with coefficients in a ring $R$ is the isomorphism conjecture for the quadruple $(G,\vcyc, K,R)$; here $\vcyc$ is the family of virtually cyclic subgroups of $G$.

Our main result is the following.

\begin{thm}\label{intro:main}
Let $\Fam$ be a family of subgroups of $G$ that contains all the cyclic subgroups. Let $k$ be a field of characteristic zero and let $f:X\to Y$ be a $(G,\Fam)$-equivalence. Suppose that $H^G(f,K(R))$ is a weak equivalence for every commutative smooth $k$-algebra $R$. Then $H^G(f,K(R))$ is a weak equivalence for every commutative $k$-algebra $R$. In particular, if the (strong) isomorphism conjecture for $(G,\Fam, K, R)$ holds for every commutative smooth $k$-algebra $R$, then it holds for every commutative $k$-algebra $R$.
\end{thm}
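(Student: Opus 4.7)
The plan is to exploit characteristic-zero tools relating $K$-theory to homotopy $K$-theory $KH$ and to cyclic homology, in order to reduce the problem with singular coefficients to the smooth case where the hypothesis applies. The crucial input is the homotopy cartesian square of Corti\~nas-Haesemeyer-Schlichting-Weibel, valid for any commutative $k$-algebra $R$ when $k$ is a field of characteristic zero:
\[
\xymatrix{K(R)\ar[r]\ar[d] & HN(R)\ar[d] \\ KH(R)\ar[r] & HN^{\mathrm{cdh}}(R).}
\]
The Davis-L\"uck equivariant homology construction $H^G(X,-)$ preserves homotopy fiber sequences in its spectrum-coefficient variable, so applying it to each corner yields an analogous homotopy cartesian square. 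A five-lemma comparison between these squares for $X$ and $Y$ via $f$ reduces the theorem to proving that $H^G(f,E(R))$ is a weak equivalence for each $E\in\{KH,HN,HN^{\mathrm{cdh}}\}$ and every commutative $k$-algebra $R$.

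For the $KH$-piece, Haesemeyer's cdh-descent theorem for $KH$, combined with Hironaka's resolution of singularities, yields that $KH(R)$ is a homotopy limit of $KH$ applied to a cdh-hypercover of $R$ by smooth $k$-algebras $R_\bullet$. Since $H^G(X,-)$ commutes with this homotopy limit, and since $K=KH$ on smooth algebras, the hypothesis of the theorem makes each map $H^G(f,KH(R_n))$ a weak equivalence, hence also $H^G(f,KH(R))$. The $HN$ and $HN^{\mathrm{cdh}}$ cases are handled analogously: both are nilinvariant, and their difference is cdh-acyclic, so the same hypercover argument reduces the problem to $HN$ on smooth $k$-algebras. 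For smooth $R$, Burghelea's explicit decomposition of $HN(R[H])$ in terms of conjugacy classes of elements of $H$ should establish directly that $H^G(f,HN(R))$ is a weak equivalence whenever $\mathcal{F}$ contains all cyclic subgroups.

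The main technical obstacle is verifying that $H^G(X,-)$ genuinely commutes with the CHSW homotopy cartesian square and with the homotopy limits arising from cdh-hypercovers; formally this is a consequence of the Davis-L\"uck construction, but it requires careful setup, especially for the non-finitary limits involved in the hypercovers, where commuting $\hocolim$ (over the orbit category, inside $H^G(X,-)$) with a $\holim$ (over the hypercover) is not automatic. A second delicate point is the cyclic-homology assembly map on smooth algebras: Burghelea's computation is classical, but combining it with the cdh-descent machinery to produce an $\mathcal{F}$-isomorphism for all commutative $k$-algebras $R$, with exactly the family $\mathcal{F}$ of the hypothesis (cyclic subgroups suffice), requires a careful verification that all the natural transformations and identifications match up compatibly with the $G$-action.
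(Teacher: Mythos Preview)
There is a genuine gap in how you transport the CHSW square to the equivariant setting. The Davis--L\"uck construction $H^G(X,E(R))$ requires a functor $\Grpd\mapsto E(R[\Grpd])$ on the (generally noncommutative) group algebras, but the corner $HN^{\cdh}(R)$ is the cdh-fibrant replacement of the presheaf $HN$ on $\sch$ evaluated at $\Spec R$; it has no meaning for $R[\Grpd]$. So there is no functor $\Grpd\mapsto HN^{\cdh}(R[\Grpd])$ to feed into the coend, and ``applying $H^G(X,-)$ to each corner'' does not produce a square. The same problem recurs in your $KH$ argument: Haesemeyer's cdh-descent concerns the presheaf $\fS\mapsto KH(\fS)$ on schemes, not $KH$ of noncommutative rings, so a cdh-hypercover of $\Spec R$ tells you nothing directly about $KH(R[H])$. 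The hocolim/holim interchange you flag is therefore not a technicality that careful setup will fix---$H^G(X,-)$ is a homotopy colimit over the orbit category and there is no general reason for it to commute with the homotopy limit over a hypercover.

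The paper circumvents this by reversing the order of operations. It first extends equivariant homology to scheme coefficients, defining a presheaf $\fS\mapsto H^G(X,K(\fS))$ on $\sch$ with $H^G(G/H,K(\fS))\simeq K(\Perf_\fS\otimes\Z[H])$, and only then takes the cdh-fibrant replacement $H^G(X,K(?))_{\cdh}$ at the presheaf level. The fiber of $H^G(X,K(\fS))\to H^G(X,K(\fS))_{\cdh}$ is identified with $\Fib H^G(X,HC(\fS))[-1]$ by showing that the presheaves $H^G(X,K^{\inf}(?))$ and $H^G(X,HP(?))$ themselves satisfy cdh-descent; this is checked via the CHSW criterion (excision, nilinvariance, Nisnevich and regular-blowup descent) orbit by orbit, using that $K^{\inf}$ and $HP$ are excisive and nilinvariant on the $\Q$-algebras $R[H]$. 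No hocolim/holim commutation is needed. Your L\"uck--Reich/Burghelea idea then handles the remaining $HC$-term. The ingredients you list are the right ones, but the order matters: the cdh machinery must be applied \emph{after} forming the equivariant homology as a presheaf on schemes, not before.
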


\begin{coro}\label{intro:cormain}
Let $G$ be a group. If $G$ satisfies the $K$-theoretic Farrell-Jones conjecture with coefficients in every commutative smooth $\Q$-algebra $R$ then it also satisfies the Farrell-Jones conjecture with coefficients in any commutative $\Q$-algebra.
\end{coro}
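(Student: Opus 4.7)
The plan is to deduce Corollary \ref{intro:cormain} as a direct specialization of the second part of Theorem \ref{intro:main}. I would set $k=\Q$ and $\Fam=\vcyc$, the family of virtually cyclic subgroups of $G$. The standing hypothesis of the theorem that $\Fam$ contains all cyclic subgroups is then automatic, since every cyclic group is in particular virtually cyclic.

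Next I would unpack the definitions so as to match the format of Theorem \ref{intro:main}. By definition, the $K$-theoretic Farrell--Jones conjecture for $G$ with coefficients in $R$ is precisely the isomorphism conjecture for the quadruple $(G,\vcyc,K,R)$, which asserts that the assembly map \eqref{intro:assem} is an isomorphism. Using \eqref{intro:cross}, the target of this map is $K_*(R[G])=H^G_*(\pt,K(R))$, so the assembly map is exactly the map induced on equivariant homology by the cofibrant replacement $f:\cE(G,\vcyc)\fibeq\pt$, which is a $(G,\vcyc)$-equivalence by construction. Hence the hypothesis of the corollary---that $G$ satisfies the Farrell--Jones conjecture with coefficients in every commutative smooth $\Q$-algebra---is word-for-word the hypothesis of the last sentence of Theorem \ref{intro:main} for $k=\Q$ and $\Fam=\vcyc$, and the conclusion of that sentence delivers the Farrell--Jones conjecture with coefficients in any commutative $\Q$-algebra, as desired.

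Since the corollary is a purely formal consequence of Theorem \ref{intro:main}, no genuine obstacle arises at this level; all of the substance is absorbed into the proof of Theorem \ref{intro:main} itself, which must reduce the $K$-theory of singular commutative $k$-algebras to the smooth case, presumably via resolution of singularities in characteristic zero together with a cdh-type descent statement for $K$-theory.
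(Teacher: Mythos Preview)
Your proposal is correct and matches the paper's approach: both Corollary~\ref{intro:cormain} and its restatement Corollary~\ref{coro:main} are stated without proof, as immediate specializations of Theorem~\ref{intro:main} with $k=\Q$ and $\Fam=\vcyc$. Your unpacking of the definitions and the observation that cyclic groups are virtually cyclic is exactly the intended (trivial) deduction.
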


Next we sketch the structure of the proof of Theorem \ref{intro:main}. First observe that any commutative $k$-algebra is a filtering colimit of subalgebras of finite type. Hence it suffices to prove the theorem for $R$ of finite type over $k$, since $K$-theory commutes with filtering colimits. Recall that the opposite of the category of commutative $k$-algebras of finite type embeds as the full subcategory of affine schemes inside the category $\sch$ of separated $k$-schemes of finite type. The $K$-theory of a scheme $\fS$ can be  defined as the $K$-theory of a certain dg-category $\Perf_\fS$.  We extend the definition of equivariant homology so that we can take coefficients in a scheme $\fS\in\sch$. It is characterized by
\[
H^G_*(G/H,K(\fS))=K_*(\Perf_\fS\otimes \Z[H]).
\]
Here $\otimes$ is the tensor product of dg-categories; $\Z[H]$ is considered as a dg-category with trivial grading and zero differential. When $\fS$ is affine, we recover the usual definition of equivariant $K$-homology; we have
\[
K_*(\Perf_{\Spec R}\otimes\Z[H])=K_*(R[H]).
\]

Our construction of equivariant $K$-homology with scheme coefficients gives a functorial spectrum $H^G(X,K(\fS))$, covariant in the first variable and contravariant in the second. 
Hence for each $G$-space $X$, we have a presheaf of spectra on $\sch$
\[
\fS\mapsto H^G(X,K(\fS)).
\]
Any equivariant map $f:X\to Y$ induces a morphism of presheaves $H^G(f,K(\fS))$. Our hypothesis says that for $f$ as in the theorem and $\fS$ smooth affine, the natural map $H^G(f,K(\fS))$ is a weak equivalence; we would be done if we could prove that $H^G(f,K(\fS))$ is a weak equivalence for all $\fS\in\sch$. There is a Grothendieck topology on $\sch$, Voevodsky's $cdh$-topology, with respect to which every scheme $\fS$ is locally smooth (here we use that $\car(k)=0$). Hence our hypothesis says that $H^G(f,K(\fS))$ is a local weak equivalence; we want to show that it is a global weak equivalence. To do this we need some results from the theory of presheaves of spectra and Grothendieck topologies. Given a category $\cal{C}$ with a Grothendieck topology $t$ there is a model category structure on the category of presheaves of spectra on $\cal{C}$ where the weak equivalences are the $t$-local weak equivalences, and is such that any weak equivalence between fibrant presheaves is a global weak equivalence. In particular any $t$-local weak equivalence $\mathscr{S}\to\mathscr{T}$ induces a global weak equivalence $\mathscr{S}_t\to\mathscr{T}_t$ between fibrant replacements. Thus in our case we have that $H^G(f,K(\fS))_\cdh$ is a global equivalence. Let 
\[
\Fib H^G(X,K(\fS))=\hofi(H^G(X,K(\fS))\to H^G(X,K(\fS))_\cdh).
\] 
We have a map of fibration sequences
\begin{equation}\label{intro:mapseq}
\xymatrix{
\Fib H^G(X,K(\fS))\ar[r]\ar[d]& H^G(X,K(\fS))\ar[d]\ar[r]&H^G(X,K(\fS))_\cdh\ar[d]^\wr\\
\Fib H^G(Y,K(\fS))\ar[r]& H^G(Y,K(\fS))\ar[r]&H^G(Y,K(\fS))_\cdh\\ 
}
\end{equation}

We know that the rightmost vertical map is a weak equivalence, and we want to show that the same is true of the vertical map in the middle. By the five lemma, it suffices to show that the leftmost vertical map is a weak equivalence. There is a similar sequence with cyclic homology substituted for $K$-theory, and we prove in Theorem \ref{thm:hgfc} that for every $G$-space $X$ and every $\fS\in\sch$ there is a weak equivalence
\begin{equation}\label{intro:mapfkfhc}
\Fib H^G(X,K(\fS))\weq \Fib H^G(X,HC(\fS))[-1].
\end{equation}
Here the cyclic homology $HC$ is taken over $\Z$.

We prove in Proposition \ref{prop:ichc} that if $\Fam$ contains all the cyclic subgroups of $G$ and $f:X\to Y$ is a $\Fam$-equivalence of $G$-spaces, then the induced map
\begin{equation}\label{intro:ichc}
\Fib H^G(X,HC(\fS))\weq \Fib H^G(Y,HC(\fS))
\end{equation}
is a weak equivalence for all $\fS\in\sch$. This concludes the proof. 

The rest of this paper is organized as follows. At the beginning of Section \ref{sec:prelis} we recall some basic notions about 
presheaves of spectra which we shall need. These include the notion of descent with respect to the Zariski, Nisnevich, and $cdh$ topologies. Each of these topologies is generated by a class of cartesian squares in $\sch$
\begin{equation}\label{intro:square}
\xymatrix{
\tilde{\fT}\ar[d]^{q}\ar[r]^{j} & \tilde{\fS}\ar[d]^{p} \\
\fT\ar[r]^{i} & \fS
}
\end{equation}
closed under isomorphisms; such a class is called a $cd$-structure. A presheaf $E$ of spectra satisfies \emph{descent} with respect to the square \eqref{intro:square} if it sends it to a homotopy cartesian diagram of spectra. If $P$ is a $cd$-structure, we say that $E$ satisfies descent with respect to $P$ if it satisfies descent for every square in $P$. If $P$ is any of the $cd$-structures considered in this paper, descent with respect to $P$ is equivalent to descent with respect to the topology $t_P$ generated by $P$. Towards the end of the section we recall some basic notions from the theory of $dg$-categories, including the definition of the pretriangulated $dg$-category $\Perf_\fS$ associated to a scheme $\fS$. Section \ref{sec:desmor} is concerned with descent properties of functors from $dg$-categories to spectra which are Morita invariant and localizing (such as, for example, $K$-theory and cyclic homology and its variants). We prove in Theorem \ref{thm:regularblowups} that if $E$ is such a functor and $\cA$ is a $dg$-category over $k$, then $\fS\mapsto E(\fS\otimes_k\cA)$ satisfies descent with respect to those cartesian squares \eqref{intro:square} such that $i$ is a regular closed immersion of pure codimension $d$ and $p$ is the blow-up along $i$. In Section \ref{sec:eqhom} we introduce equivariant homology with scheme coefficients. We show in Subsection \ref{subsec:equikhc} that any Morita invariant functor $E$ from $dg$-categories to spectra gives rise to an equivariant homology theory of $G$-spaces $H^G(X,E(\fS))$ with coefficients in $\fS\in\sch$ such that $H^G(G/H,E(\fS))\weq E(\Perf_{\fS}\otimes\Z[H])$. In Subsection \ref{subsec:equiche} we construct a functorial equivariant Chern character
$H^G(X,K(\fS))\to H^G(X,HN(\fS))$ from equivariant $K$-theory to equivariant negative cyclic homology. We write $H^G(X,K^{\inf}(\fS))$ for the homotopy fiber of this character. In Section \ref{sec:descequi} we use Theorem \ref{thm:regularblowups} and a result from \cite{chsw} which we recall in Theorem \ref{thm:chsw}, to prove in Proposition \ref{prop:cdh} that if $k$ is a field of characteristic zero, then the presheaves $H^G(X,K^{\inf}(?))$ and $H^G(X,HP(?))$ satisfy $cdh$-descent on $\sch$; here $HP$ is periodic cyclic homology. Then we use the latter result to prove \eqref{intro:mapfkfhc} (Theorem \ref{thm:hgfc}). In Section \ref{sec:ic} we prove \eqref{intro:ichc} (Proposition \ref{prop:ichc}) and Theorem \ref{intro:main} (Theorem \ref{thm:main}).

\begin{rem}
By combining our Corollary \ref{intro:cormain} with results recently announced by Carlsson-Goldfarb \cite{cargo}, one could conclude that the Farrell-Jones conjecture in $K$-theory holds with coefficients in any commutative $\Q$-algebra, for $G$ a torsion-free, geometrically finite group of finite asymptotic dimension.
\end{rem}

\section{Preliminaries}\label{sec:prelis}
\numberwithin{equation}{subsection}
\subsection{cd-Structures}\label{subsec:cdst}

Let $k$ be a field. Let $\sch$ be the category of separated $k$-schemes of finite type and let
\begin{equation}\label{square}
\xymatrix{\tilde{\fT}\ar[d]^{q}\ar[r]^{j} & \tilde{\fS}\ar[d]^{p} \\
\fT\ar[r]^{i} & \fS \\}
\end{equation}
be a cartesian square in $\sch$. We say that the cartesian square \eqref{square} is \emph{Nisnevich} if $i$ is an open immersion and $p$ is an \'{e}tale morphism that induces an isomorphism between the \emph{reduced} schemes
\begin{equation}\label{map:nisnecond}
(\tilde{\fS}-\tilde{\fT})_{\red}\overset{\simeq}\longrightarrow(\fS-\fT)_{\red}.
\end{equation} 
A \emph{Zariski} square is a Nisnevich square such that both $i$ and $p$ are open immersions; in this case condition \eqref{map:nisnecond} means that $\fS=i(\fT)\cup p(\tilde{\fS})$. We say that the cartesian square \eqref{square} is a \emph{regular blow-up} if $i$ is a regular closed embedding of pure codimension $d$ and $\tilde{\fS}$ is the blowup of $\fS$ along $\fT$. In general a cartesian square \eqref{square} is an \emph{abstract blow-up} if $i$ is a closed immersion and $p$ is a proper morphism that induces an isomorphism $(\tilde{\fS}-\tilde{\fT})\to (\fS-\fT)$.  A \emph{cd-structure} on a small category $\cal{C}$ is a set $P$ of commutative squares in $\cal{C}$ which is closed under isomorphisms. A category $\cal{C}$ with a cd-structure $P$ has an associated Grothendieck topology $t_P$ (see \cite{voeho}*{Section 2}). The \emph{Zariski} and the \emph{Nisnevich} topologies in $\sch$ are the topologies generated by the Zariski and the Nisnevich squares, respectively. The \emph{$cdh$ topology} is the topology on $\sch$ generated by the \emph{combined $cd$ structure} which consists of the Nisnevich squares and the abstract blow-up squares. 

\subsection{Presheaves of spectra and descent}\label{subsec:prsh}

Let $\cal{C}$ be a small category. The category $\spt^{\cal{C}^{\op}}$ of presheaves of spectra has a model structure, called the \emph{global injective model structure}, in which weak equivalences and cofibrations are defined objectwise and fibrations are defined by the right lifting property. If $\cal{C}$ is equipped with a Grothendieck topology $t$ we can endow $\spt^{\cal{C}^{\op}}$ with a different model structure, as we proceed to explain. Let $a_t$ denote the associated sheaf functor from the category of presheaves of abelian groups on $\cal{C}$ to the category of sheaves of abelian groups on $(\cal{C}, t )$. A \emph{$t$-local weak equivalence} is a morphism $E\to F$ that induces an isomorphism $a_t\pi_*E\to a_t\pi_*F$ on the associated sheaves of stable homotopy groups. The \emph{$t$-local injective fibrations} are defined by the right lifting property with respect to those morphisms which are (objectwise) cofibrations and $t$-local weak equivalences. The classes of (objectwise) cofibrations, $t$-local injective fibrations and $t$-local weak equivalences satisfy the axioms for a model structure on $\spt^{\cal{C}^{\op}}$ \cite{jarget}*{Theorem 3.24} which we will call the \emph{$t$-local injective model structure}.

Let $E$ be a presheaf of spectra on $\cal{C}$ and let \eqref{square} be a diagram in $\cal{C}$. We say that $E$ has \emph{descent} for the square \eqref{square} if the diagram
\begin{equation}\label{square2}
\xymatrix{E(\tilde{\fT}) & E(\tilde{\fS})\ar[l] \\
E(\fT)\ar[u] & E(\fS)\ar[u]\ar[l] \\}
\end{equation}
is homotopy cartesian. Given a cd-structure $P$ on $\cal{C}$ we say that $E$ \emph{has descent for $P$} if it has descent for every square in $P$. Suppose that $P$ is complete, regular and bounded in the sense of \cite{voeho}*{Definitions 2.3, 2.10, and 2.22}. For example, the Zariski, Nisnevich, and combined cd-structures on $\sch$ have all these properties (\cite{unst}*{Theorem 2.2}). Give $\cal{C}$ the topology $t=t_P$ generated by $P$ and let $E\mapsto E_t$ be a functorial fibrant replacement for the $t$-local injective model structure in $\spt^{\cal{C}^{\op}}$. Let $E$ be a presheaf of spectra on $\cal{C}$. Then $E$ has descent for $P$ if and only if the natural map $E\to E_t$ is a global weak equivalence \cite{chsw}*{Theorem 3.4}. If this is the case we say that $E$ \emph{has descent for $t$ (or $t$-descent)}.

\subsection{dg-Categories}\label{subsec:dg}

The definitions in this subsection follow those in \cite{kelicm}. A \emph{$k$-category} is a category $\cal{A}$ whose hom-sets are $k$-modules and in which composition is $k$-bilinear. A \emph{dg-category} (over $k$) is a $k$-category $\cal{A}$ whose hom-sets are dg $k$-modules and in which the morphisms
\[\cal{A}(y,z)\otimes \cal{A}(x,y)\to \cal{A}(x,z)\]
induced by composition are morphisms of dg $k$-modules; see \cite{kelicm}*{2.1} for details.

Let $\dgm(k)$ be the category of dg $k$-modules and morphisms of dg $k$-modules. We consider $\dgm(k)$ as a closed symmetric monoidal category with the usual tensor product of dg $k$-modules and the structure we proceed to describe. Let $E$ and $F$ be dg $k$-modules. The symmetry isomorphism $E\otimes F\simeq F\otimes E$ is given by $v\otimes w\leftrightarrow (-1)^{pq}w\otimes v$ for $v\in E^p$ and $w\in F^q$. The internal hom $\cHom(E,F)$ has components
\[\cHom(E,F)^p=\prod_{q\in\Z}\hom_k(E^q,F^{q+p})\]
and differential $d(f)=d_F\circ f-(-1)^pf\circ d_E$ for homogeneous $f$ of degree $p$. With these definitions, a dg-category over $k$ is the same as a $\dgm(k)$-category in the sense of \cite{kelly}*{1.2}.

There is a dg-category $\mdg(k)$ whose objects are dg $k$-modules and whose hom-sets are given by the internal hom in $\dgm(k)$ \cite{kelicm}*{2.2}.

Let $\cal{A}$ and $\cal{B}$ be dg-categories over $k$. A \emph{dg-functor} $F:\cal{A}\to\cal{B}$ is a functor such that the functions $F(x,y):\cal{A}(x,y)\to \cal{B}(F(x),F(y))$ are morphisms of dg $k$-modules. Equivalently, a dg-functor is a $\dgm(k)$-functor in the sense of \cite{kelly}*{1.2}. We will write $\dgcat$ for the category of small dg-categories over $k$ and dg-functors. There is a symmetric tensor product $\otk$ which makes $\dgcat$ into a symmetric monoidal category \cite{kelicm}*{2.3}.

Every dg-category $\cal{A}$ has an associated category $H^0(\cal{A})$ (resp. $Z^0(\cal{A})$) which has the same objects as $\cal{A}$ and whose hom-sets are given by zero cohomology modules (resp. zero cycle modules)
\[
\hom_{H^0(\cal{A})}(x,y)=H^0\cal{A}(x,y)\hspace{1em}\mbox{(resp. $\hom_{Z^0(\cal{A})}(x,y)=Z^0\cal{A}(x,y)$).}
\]
The category $H^0(\cal{A})$ is called the \emph{homotopy category} of $\cal{A}$. A dg-functor $F:\cal{A}\to\cal{B}$ is a \emph{quasi-equivalence} if it induces an equivalence of categories $H^0(F):H^0(\cal{A})\to H^0(\cal{B})$.

Let $\cal{A}$ be a dg-category. A \emph{right $\cal{A}$-module} is a dg-functor $M:\cal{A}^{\op}\to \mdg(k)$; here $\cal{A}^{\op}$ is the \emph{opposite dg-category} \cite{kelicm}*{2.2}. We define $\dgm(\cal{A})$ to be the category whose objects are the right $\cal{A}$-modules and whose hom-sets $\hom_{\dgm(\cal{A})}(M,N)$ consist of the morphisms of functors $\varphi:M\to N$ such that $\varphi_x:Mx\to Nx$ is a morphism of dg $k$-modules for every object $x$ of $\cal{A}$. A morphism $\varphi\in\hom_{\dgm(\cal{A})}(M,N)$ is a \emph{quasi-isomorphism} if the morphisms $\varphi_x:Mx\to Nx$ induce isomorphisms in cohomology for every object $x$ of $\cal{A}$. The category $\dgm(\cal{A})$ has a model structure for which the weak equivalences are the quasi-isomorphisms \cite{kelicm}*{Theorem 3.2}. The derived category $D(\cal{A})$ is by definition the category $\Ho\dgm(\cal{A})$.

A dg-functor $\cal{A}\to \cal{B}$ is a \emph{Morita equivalence} if the restriction functor $D(\cal{B})\to D(\cal{A})$ is an equivalence of categories; see \cite{kelicm}*{4.6}. For example, every quasi-equivalence is a Morita equivalence.

\subsection{Pretriangulated dg-categories}\label{subsec:pretr}

Let $\cal{A}$ be a dg-category. The category $\dgm(\cal{A})$ has a natural $\dgm(k)$-enrichment into a dg-category $\mdg(\cal{A})$ \cite{kelly}*{2.2}. There is a \emph{Yoneda dg-functor} $Y:\cal{A}\to \mdg(\cal{A})$ which is fully faithful \cite{kelly}*{2.4}. A dg-category $\cal{A}$ is \emph{pretriangulated} if the image of the functor $Z^0(Y):Z^0(\cA)\to \dgm(\cA)$ is closed under shifts and cones of morphisms of $\cal{A}$-modules; see \cite{kelicm}*{4.5}. This notion of pretriangulated dg-category is the same as Keller's notion of \emph{exact dg-category} \cite{kelex}*{2.1} and Drinfeld's notion of \emph{strongly pretriangulated dg-category} \cite{drin}*{2.4}.

The homotopy category $H^0\mdg(\cal{A})$ is a triangulated category in a natural way. The Yoneda dg-functor induces a fully faithful functor $H^0(\cal{A})\to H^0\mdg(\cal{A})$. If $\cal{A}$ is pretriangulated, the image of this functor is closed under shifts and cones. Thus, $H^0(\cal{A})$ inherits a triangulated structure from $H^0\mdg(\cal{A})$. Any dg-functor $F:\cal{A}\to\cal{B}$ between pretriangulated dg-categories induces a triangulated functor $H^0(F):H^0(\cal{A})\to H^0(\cal{B})$. Every dg-category $\cal{A}$ admits a universal dg-functor $\cal{A}\to\pretr(\cal{A})$ into a pretriangulated dg-category \cite{kelicm}*{4.5} and this dg-functor is a Morita equivalence. Moreover, the dg-functor $\pretr(\cal{A})\to\mdg(\cal{A})$ induced by the Yoneda dg-functor $\cal{A}\to\mdg(\cal{A})$ is fully faithful. For example if $R$ is a $k$-algebra considered as a dg-category with only one object, then $\mdg(R)$ is the dg-category of cochain complexes of right $R$-modules described in \cite{kelicm}*{2.2} and $\pretr(R)$ identifies with its full subcategory of strictly bounded complexes of finitely generated free $R$-modules.

Write $\cT^\sim$ for the idempotent completion \cite{balsch}*{1.5} of a triangulated category $\cT$. Let $F:\cal{A}\to\cal{B}$ be a dg-functor with $\cal{A}$ non empty. Then $F$ is a Morita equivalence if and only if $H^0(F)$ is fully faithful and $H^0(\pretr(F))^\sim$ is essentially surjective \cite{tabuadainvariants}*{5}.

\subsection{dg-Enhancement of schemes}\label{subsec:perfect}

A \emph{dg-enhancement} of a triangulated category $T$ is a pretriangulated dg-category $\cT$ such that $H^0(\cT)$ is triangle equivalent to $T$. It is possible to construct a presheaf of dg-categories $\Perf_?:\sch^{\op}\to\dgcat$ which gives a functorial dg-enhancement of the derived category $\Dperf(\fS)$ of perfect complexes on a scheme $\fS\in\sch$; see for example \cite{chsw}*{Example 2.7} and \cite{dgperf}. We proceed to describe briefly the idea of such a construction. First consider a dg-category $\perf_\fS$ whose objects are a certain class of perfect complexes of $\Oo_\fS$-modules and whose dg $k$-modules of morphisms are given by the internal hom of cochain complexes; in both references the strict perfect complexes of free $\Oo_\fS$-modules are objects of $\perf_\fS$. Then let $\ac_\fS$ be the full subcategory of $\perf_\fS$ whose objects are the acyclic complexes and define $\Perf_\fS$ to be the Drinfeld quotient $\perf_\fS/\ac_\fS$ \cite{drin}*{3.1}.

Let $R$ be a commutative $k$-algebra of finite type and let $\fS=\Spec R$. Consider $R$ as a dg-category over $k$ with only one object; there is a dg-functor $R\to\perf_\fS$ which sends the unique object of $R$ to the $\Oo_\fS$-module $\Oo_\fS$ concentrated in degree zero. Composing this with the dg-functor $\perf_\fS\to\Perf_\fS$ we get a dg-functor $\nu:R\to\Perf_\fS$, which is a Morita equivalence as we proceed to explain. As explained above, the dg-category $\pretr(R)$ can be identified with the full subcategory of $\perf_\fS$ whose objects are strict perfect complexes of free $\Oo_\fS$-modules. The dg-functor $\nu$ equals the composition
\[R\to \pretr(R)\to \perf_\fS\to \Perf_\fS.\]
The morphism $R\to\pretr(R)$ is a Morita equivalence. Write $\zeta$ for the composite $\pretr(R)\to\Perf_\fS$. The functor $H^0(\zeta)$ is fully faithful and identifies $H^0(\pretr(R))$ with the full subcategory of $\Dperf(\fS)$ whose objects are the strict perfect complexes of free $\Oo_\fS$-modules. Moreover, the functor
\[H^0(\pretr(\zeta))^\sim:H^0(\pretr(R))^\sim\to H^0(\Perf_\fS)^\sim\simeq\Dperf(\fS).\]
is an equivalence since any perfect complex is quasi-isomorphic to a strict perfect complex \cite{tt}*{2.3.1 (d)} and any strict perfect complex is a direct summand of a strict perfect complex of free $R$-modules. Thus $\zeta$ is a Morita equivalence.

\section{Descent properties of Morita invariant and localizing functors}\label{sec:desmor}

\numberwithin{equation}{section}
A functor $E:\dgcat\to\spt$ is \emph{Morita invariant} if it sends Morita equivalences to weak equivalences of spectra. A functor $E:\dgcat\to\spt$ is \emph{localizing} if it sends short exact sequences of dg-categories \cite{kelicm}*{4.6} to homotopy fibration sequences of spectra.

Examples of Morita invariant and localizing functors are non-connective algebraic K-theory $K$ \cite{marcoh}*{3.2.32} as well as Hochschild, cyclic, negative cyclic and periodic cyclic homology, denoted $HH$, $HC$, $HN$ and $HP$ \cite{kelex}*{2.4}. These functors recover the classical invariants for schemes when applied to $\Perf_\fS$. Throughout this paper, cyclic homology and its variants are taken over $\Z$. By a result of Thomason, K-theory has descent for Nisnevich squares and for regular-blowup squares \cite{tt}*{8.1}. By \cite{chsw}*{Theorem 2.10} the same happens to cyclic homology. We recall the following generalisation of these results, which is the particular case for $dg$-categories and schemes over a field of a general result of Tabuada, valid over any commutative ground ring. 

\begin{thm}\cite{tab}*{Theorem 3.1}\label{thm:tabuada}
Let $k$ be a field, $E:\dgcat\to\spt$ a Morita invariant and localizing functor, and $\cal{A}$ a dg-category over $k$. Consider the functor $F:\sch^{\op}\to\spt$ given by $\fS\mapsto E(\Perf_\fS\otk \cal{A})$. Then $F$ has Nisnevich descent. 
\end{thm}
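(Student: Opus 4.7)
My plan is to check Nisnevich descent square by square, and to realize each resulting square of spectra as a map of localization fibre sequences coming from short exact sequences of dg-categories. Fix a Nisnevich square as in \eqref{square}, and let $Z\subset\fS$ denote the reduced closed complement of $i(\fT)$ and $\tilde Z\subset\tilde\fS$ that of $j(\tilde\fT)$. For any scheme $V\in\sch$ with closed subscheme $W$ and open complement $U$, I would work with the full pretriangulated dg-subcategory $\Perf_{V,W}\subset\Perf_V$ consisting of those perfect complexes whose restriction to $U$ is acyclic; this is a dg-enhancement of the Verdier kernel of the restriction $\Dperf(V)\to\Dperf(U)$. The Thomason--Trobaugh localization theorem for perfect complexes upgrades this to a short exact sequence of dg-categories in Keller's sense,
\[
\Perf_{V,W}\longrightarrow\Perf_V\longrightarrow\Perf_U,
\]
the second arrow being restriction. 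Applied to $(\fS,Z,\fT)$ and to $(\tilde\fS,\tilde Z,\tilde\fT)$, this yields two such sequences joined by the pullbacks along $p$, $i$, $j$, $q$.

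The key geometric ingredient is Nisnevich excision: because $p$ is \'etale and restricts to an isomorphism $\tilde Z_{\red}\to Z_{\red}$, the pullback
\[
p^{*}:\Perf_{\fS,Z}\longrightarrow\Perf_{\tilde\fS,\tilde Z}
\]
is a Morita equivalence. This is the dg refinement of Thomason's classical Nisnevich excision for perfect complexes on schemes, and identifies the left-hand columns of the two localization sequences up to Morita equivalence.

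The step I expect to be the main obstacle is propagating these inputs through the tensor product $-\otk\cA$, since $\otk$ does not in general preserve short exact sequences of dg-categories (the usual flatness pathology of the non-derived tensor product). I would handle this by working in Tabuada's Morita homotopy category $\mathrm{Hmo}$, where $\otk$ is replaced by its derived version and short exact sequences of pretriangulated dg-categories are preserved under tensoring with any object; Morita invariance of $E$ ensures that it factors through $\mathrm{Hmo}$. Equivalently, one may first replace each dg-category in the sequences above by a $k$-flat, semi-free pretriangulated resolution, using that $\Perf_?$ is built from complexes of $\Oo_?$-modules, so that $-\otk\cA$ already computes the derived tensor product and preserves exactness on the nose. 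Either route produces short exact sequences
\[
\Perf_{\fS,Z}\otk\cA\to\Perf_{\fS}\otk\cA\to\Perf_{\fT}\otk\cA,
\]
\[
\Perf_{\tilde\fS,\tilde Z}\otk\cA\to\Perf_{\tilde\fS}\otk\cA\to\Perf_{\tilde\fT}\otk\cA,
\]
whose leftmost terms are still identified, via $p^{*}\otimes\mathrm{id}_{\cA}$, by a Morita equivalence.

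Finally, the localizing property of $E$ converts each sequence into a homotopy fibration of spectra, and Morita invariance makes the induced map between the fibres a weak equivalence. A standard comparison of fibration sequences then forces the square
\[
\xymatrix{F(\fS)\ar[r]\ar[d] & F(\tilde\fS)\ar[d]\\ F(\fT)\ar[r] & F(\tilde\fT)}
\]
to be homotopy cartesian, which is precisely the descent condition for $F$ on the given Nisnevich square, and hence Nisnevich descent for $F$.
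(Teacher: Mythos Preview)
The paper does not give its own proof of this statement; it is recorded as a citation to Tabuada \cite{tab}*{Theorem 3.1}, with the remark that the version stated here is the specialization to a ground field of a result valid over an arbitrary commutative base ring. Your sketch is essentially the standard argument and is correct in outline; it also mirrors the strategy the paper does spell out in full for the companion result on regular blow-ups (Theorem~\ref{thm:regularblowups}): enhance the relevant Verdier localizations to short exact sequences of dg-categories, identify the kernels via a Morita equivalence coming from the geometry of the square, tensor with $\cA$, and invoke the localizing property of $E$.

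Your one substantive worry---that $-\otk\cA$ might fail to preserve short exact sequences of dg-categories---is a non-issue in the paper's setting, because $k$ is a field. Every dg $k$-module is flat, so the non-derived tensor product of dg-categories over $k$ already agrees with the derived one, and by \cite{drin}*{1.6.3} it preserves Morita equivalences and short exact sequences. The paper invokes exactly this fact at the end of the proof of Theorem~\ref{thm:regularblowups}. Your detour through $\mathrm{Hmo}$ or through semi-free resolutions is therefore unnecessary here, though it is indeed the point one has to address in Tabuada's original generality over an arbitrary commutative base.
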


In the same vein we have the following result.

\begin{thm}\label{thm:regularblowups}
Let $k$ be a field, $E:\dgcat\to\spt$ a Morita invariant and localizing functor and $\cal{A}$ a dg-category. Consider the functor $F:\sch^{\op}\to\spt$ given by $\fS\mapsto E(\Perf_\fS\otk \cal{A})$. Then $F$ has descent for regular-blowup squares. 
\end{thm}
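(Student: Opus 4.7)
The plan is to reduce the statement to the classical Thomason--Orlov blow-up formula together with the projective bundle formula, both lifted to the pretriangulated dg-enhancements, and to exploit the fact that any Morita invariant and localizing functor converts semi-orthogonal decompositions (SODs) into split fibration sequences of spectra. For the regular closed embedding $i:\fT\hookrightarrow\fS$ of pure codimension $d$ with blow-up $p:\tilde\fS\to\fS$ and exceptional divisor $j:\tilde\fT=\mathbb P_\fT(\cal N_{\fT/\fS})\hookrightarrow\tilde\fS$, these formulas provide
\[
\Perf_{\tilde\fS}=\bigl\langle p^\ast\Perf_\fS,\;j_\ast(q^\ast\Perf_\fT)(1),\ldots,j_\ast(q^\ast\Perf_\fT)(d-1)\bigr\rangle,
\]
\[
\Perf_{\tilde\fT}=\bigl\langle q^\ast\Perf_\fT,\,(q^\ast\Perf_\fT)(1),\ldots,(q^\ast\Perf_\fT)(d-1)\bigr\rangle,
\]
each SOD component being dg Morita equivalent to $\Perf_\fS$ or $\Perf_\fT$. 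These SODs lift the classical triangulated decompositions to filtrations by full pretriangulated dg-subcategories whose successive quotients produce iterated split short exact sequences of dg-categories; tensoring over $k$ with $\cA$ preserves these sequences, up to the cofibrant replacement which $E$ does not see by Morita invariance.

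Applying $E$ to the two filtrations then produces natural wedge decompositions
\[
E(\Perf_{\tilde\fS}\otk\cA)\simeq E(\Perf_\fS\otk\cA)\vee\bigvee_{k=1}^{d-1}E(\Perf_\fT\otk\cA),
\]
\[
E(\Perf_{\tilde\fT}\otk\cA)\simeq\bigvee_{k=0}^{d-1}E(\Perf_\fT\otk\cA).
\]
Under these identifications the map $j^\ast$ becomes a block matrix whose entries are computable from the geometry. On the $p^\ast\Perf_\fS$ summand it equals $q^\ast i^\ast$ and lands in the $k=0$ slot of the target. On each exotic summand $j_\ast(q^\ast\Perf_\fT)(k)$ it is controlled by the Cartier divisor sequence $0\to\Oo_{\tilde\fS}(-\tilde\fT)\to\Oo_{\tilde\fS}\to j_\ast\Oo_{\tilde\fT}\to 0$ together with the identity $j^\ast\Oo_{\tilde\fS}(-\tilde\fT)=\Oo_{\tilde\fT}(1)$, which identifies $j^\ast j_\ast(q^\ast\cal F\otimes\Oo(k))$ with the cone of $q^\ast\cal F\otimes\Oo(k+1)\to q^\ast\cal F\otimes\Oo(k)$. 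Tracking this through the splittings yields the block form
\[
j^\ast=\begin{pmatrix}i^\ast & 0 \\ 0 & M\end{pmatrix},
\]
with $M$ a lower-triangular endomorphism of $\bigvee_{k=1}^{d-1}E(\Perf_\fT\otk\cA)$ having identities on the diagonal, and hence an equivalence of spectra. The induced map of horizontal cofibres of our square is therefore $\mathrm{cof}(i^\ast)$ on both sides, so the square is homotopy cartesian.

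The principal obstacle is the dg-enhancement of the preceding geometric computations. While the SODs above are classical on the triangulated side, one must realise them as filtrations by pretriangulated dg-subcategories whose successive quotients $E$ recognises as wedge summands, and the Koszul-type identification of $j^\ast j_\ast$ must be performed by short exact sequences of dg-bimodules rather than merely distinguished triangles in $\mathbf{D}_{\mathrm{perf}}$. Extra care is required at the boundary $k=d-1$, where the triangle involves $\Oo(d)$ and must be rewritten inside the SOD of $\Perf_{\tilde\fT}$ via the Euler/Koszul relations of the projective bundle; this contributes additional entries to $M$ but preserves its triangular form with identity diagonal. Once these dg-level enhancements are in place, the conclusion follows purely formally from the wedge decompositions and the localizing property of $E$, with no further $K$-theoretic input.
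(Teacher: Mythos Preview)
Your approach and the paper's share the same skeleton: lift the Orlov--Thomason semi-orthogonal decompositions for the blow-up and the exceptional divisor to filtrations of the dg-enhancements $\Perf_{\tilde\fS}$ and $\Perf_{\tilde\fT}$, tensor with $\cA$, and feed the resulting short exact sequences to the localizing functor $E$. The difference lies in how the comparison along $j^\ast$ is carried out.

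The paper never splits the filtration into wedge summands and never computes $j^\ast$ as a matrix. It works directly with the filtration $\Perf^0\subset\Perf^1\subset\cdots\subset\Perf^{d-1}$ on both $\tilde\fS$ and $\tilde\fT$ (taken from \cite{chsw}*{\S1}) and invokes two facts proved there: $p^\ast$ and $q^\ast$ are Morita equivalences onto $\Perf^0$, and $j^\ast$ induces a Morita equivalence on each successive Drinfeld quotient $\Perf^{l+1}/\Perf^l$. Since tensoring with $\cA$ preserves Morita equivalences and short exact sequences of dg-categories, an immediate induction along the filtration shows that $E(j^\ast\otimes\cA)$ is an equivalence on the cofibres of $p^\ast$ and $q^\ast$, and the square is homotopy cartesian. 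No boundary case arises.

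Your route, by contrast, first splits each filtration step to obtain wedge decompositions and then analyses $j^\ast$ as a matrix. Two remarks on this. First, the block form you write is not quite right: the upper-right block need not vanish, since at $k=d-1$ the rewriting of $\Oo(d)$ via the projective-bundle Koszul relations contributes terms in the $q^\ast\Perf_\fT$ slot as well; fortunately this is harmless, because only the induced map on cofibres matters and that is governed solely by $M$. Second, and more seriously, the assertion that $M$ remains lower-triangular \emph{with identity diagonal} after the boundary rewriting is exactly the point requiring proof: the Euler/Koszul resolution of $\Oo(d)$ has a nontrivial component in degree $d-1$, so the $(d-1,d-1)$ entry of $M$ is a priori the identity plus a correction, and you still owe the argument that this sum is an equivalence. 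The paper's filtration-and-quotient argument sidesteps this entirely, since the statement ``$j^\ast$ is a Morita equivalence on each graded piece'' is precisely what \cite{chsw}*{Proposition 1.5} supplies, with no matrix bookkeeping and no boundary case.

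In short: your strategy is sound and ultimately equivalent to the paper's, but the paper's execution is both shorter and cleaner, and your matrix computation has a residual gap at $k=d-1$ that the paper's formulation avoids by design.
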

\begin{proof}
Let \eqref{square} be a regular blow-up. Assume that $i$ is a regular embedding of pure codimension $d$. We consider the filtration of $\Dperf(\tilde{\fS})$ (resp. $\Dperf(\tilde{\fT})$) by the triangulated subcategories $\Dperf^l(\tilde{\fS})$ (resp. $\Dperf^l(\tilde{\fT})$) defined in \cite{chsw}*{1}. Let $\Perf_{\tilde{\fS}}$ be the pretriangulated dg-category constructed in \ref{subsec:perfect}. For $l=0,\dots,d-1$ let $\Perf _{\tilde{\fS}}^l$ be the full dg-subcategory of $\Perf_{\tilde{\fS}}$ of objects that lie in $\Dperf^l(\tilde{\fS})$. The dg-categories $\Perf_{\tilde{\fS}}^l$ give a filtration of $\Dperf(\tilde{\fS})$ by pretriangulated dg-categories; moreover $H^0(\Perf_{\tilde{\fS}}^l)=\Dperf^l(\tilde{\fS})$. In the same way we construct a filtration of $\Perf_{\tilde{\fT}}$. There is a commutative diagram of dg-categories as follows.
\[\begin{tikzpicture}
\matrix(m)[matrix of math nodes, row sep=1.75em, column sep=1.2em, text height=1.5ex, text depth=0.25ex]
{\Perf_{\fS} & \Perf^0_{\tilde{\fS}} & \Perf^1_{\tilde{\fS}} & \cdots & \Perf^{d-1}_{\tilde{\fS}} = \Perf_{\tilde{\fS}} \\
\Perf_{\fT} & \Perf^0_{\tilde{\fT}} & \Perf^1_{\tilde{\fT}} & \cdots & \Perf^{d-1}_{\tilde{\fT}} = \Perf_{\tilde{\fT}} \\};
\path[->,font=\scriptsize]
(m-1-1) edge node[auto] {$p^*$} (m-1-2)
(m-1-1) edge node[left] {$j^*$} (m-2-1)
(m-2-1) edge node[auto] {$q^*$} (m-2-2)
(m-1-2) edge node[auto] {$j^*$} (m-2-2)
(m-1-3) edge node[auto] {$j^*$} (m-2-3)
(m-1-5) edge node[auto] {$j^*$} (m-2-5);
\path[right hook->]
(m-1-2) edge (m-1-3)
(m-1-3) edge (m-1-4)
(m-1-4) edge (m-1-5)
(m-2-2) edge (m-2-3)
(m-2-3) edge (m-2-4)
(m-2-4) edge (m-2-5);
\end{tikzpicture}
\]
To prove the theorem we must show that after tensoring this diagram with $\cal{A}$ and then applying $E$ the outer square is homotopy cocartesian. The dg-functor $p^*:\Perf_\fS\to \Perf_{\tilde{\fS}}^0$ (resp. $q^*:\Perf_\fT\to \Perf_{\tilde{\fT}}^0$) induces an equivalence upon taking $H^0$ by \cite{chsw}*{Lemma 1.4}; thus it is a Morita equivalence. The dg-functor
\[j^*:\Perf^{l+1}_{\tilde{\fS}}/\Perf^l_{\tilde{\fS}}\to \Perf^{l+1}_{\fT}/\Perf^l_{\fT} \hspace{1em}(l=0,\dots, d-2)\]
induces an equivalence upon taking $H^0$ by \cite{chsw}*{Proposition 1.5}; thus it is a Morita equivalence. The result follows from these observations and the fact that tensoring with $\cal{A}$ preserves Morita equivalences and short exact sequences of dg-categories \cite{drin}*{1.6.3}.
\end{proof}

\section{Equivariant homology theories}\label{sec:eqhom}
\numberwithin{equation}{subsection}
\subsection{Equivariant homology}

Let $G$ be a group. We will write $\sset^G$ for the category of $G$-spaces and equivariant maps. Let $\Or(G)$ be the category whose objects are the cosets $G/H$ and whose morphisms are the homomorphisms of $G$-sets. Let $\grpd$ be the category of small groupoids. Let $\Grpd^G:G-\sets\to\grpd$ be the functor which sends a $G$-set to its transport groupoid. By definition we have $\ob\Grpd^G(U)=U$ and $\hom_{\Grpd^G(U)}(x,y)=\{g\in G/gx=y\}$. Let $F:\grpd\to\spt$ be a functor that sends equivalences of categories to weak equivalences of spectra. As explained in \cite{lr}*{Proposition 157}, $F$ gives rise to a equivariant homology theory putting
\[
H^G(X,F):= \int^{G/H\in\Or(G)}\map_G(G/H,X)\otimes F(\Grpd^G(G/H))
\]
for all $X\in\sset^G$. In the formula above $\otimes$ stands for the simplicial action in the simplicial closed model category of spectra: for $Y\in\sset$ and $E\in\spt$ the spectrum $Y\otimes E$ is obtained by degreewise application of the functor $Y_+\wedge ?$. 
The following lemmas will be used several times.

\begin{lem}\label{lem:weq}
Let $F,F':\grpd\to \spt$ be functors and let $F\to F'$ be an objectwise weak equivalence. Then the map $H^G(X,F)\to H^G(X,F')$ is a weak equivalence of spectra for all $X\in\sset^G$.
\end{lem}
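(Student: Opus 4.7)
The plan is to exhibit $H^G(X,F)$ as a (homotopy) coend that is functorial in $F$ and preserves pointwise weak equivalences. Unpacking the definition recalled from \cite{lr}*{Proposition 157}, for each $X\in\sset^G$ we have
\[
H^G(X,F)=\int^{G/H\in\Or(G)}\map_G(G/H,X)\otimes F(\Grpd^G(G/H)),
\]
and in the Davis--L\"uck/L\"uck--Reich framework this is realized as the geometric realization of the two-sided bar construction $B_\bullet=B_\bullet(\map_G(-,X),\Or(G),F\circ\Grpd^G)$, whose spectrum of $n$-simplices is
\[
B_n=\coprod_{G/H_0\to\cdots\to G/H_n}\map_G(G/H_0,X)\otimes F(\Grpd^G(G/H_n)).
\]

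First I would note that the hypothesis gives, for every orbit $G/H$, a weak equivalence of spectra $F(\Grpd^G(G/H))\weq F'(\Grpd^G(G/H))$. Next, for any simplicial set $Y$, the simplicial tensor $Y\otimes(-)$ on $\spt$ is computed level-wise by smashing with $Y_+$; since $Y_+$ is cofibrant in pointed simplicial sets, this functor preserves stable weak equivalences. Hence for every chain $G/H_0\to\cdots\to G/H_n$ in $\Or(G)$ the induced map
\[
\map_G(G/H_0,X)\otimes F(\Grpd^G(G/H_n))\to \map_G(G/H_0,X)\otimes F'(\Grpd^G(G/H_n))
\]
is a weak equivalence of spectra. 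Taking coproducts in $\spt$ preserves weak equivalences, so assembling the summands yields a level-wise weak equivalence $B_\bullet\to B'_\bullet$ of simplicial spectra.

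It remains to pass from this level-wise weak equivalence to a weak equivalence of geometric realizations. The two-sided bar construction is Reedy cofibrant by the standard argument: its latching maps at level $n$ are inclusions of the summands indexed by degenerate chains, so geometric realization preserves the level-wise equivalence and we conclude $H^G(X,F)\weq H^G(X,F')$. The only subtle point, and the place where one must be careful, is ensuring that the coend is interpreted as a homotopy coend rather than a strict one; this is exactly what the bar/nerve resolution accomplishes, and is built into the construction cited from \cite{lr}. If instead one tried to argue directly at the level of the strict coend, one would need projective cofibrancy of the $\Or(G)$-spectrum $F\circ\Grpd^G$, which is not part of the hypotheses and is the main reason the bar construction is used in the first place.
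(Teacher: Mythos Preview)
The paper's own proof is a one-line citation to \cite{dl}*{Lemma 4.6}, so you are essentially reproving that lemma. Your argument is sound, but it is organized differently from Davis--L\"uck's, and one step deserves sharpening.

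The coend in the definition is a \emph{strict} coend (balanced product in Davis--L\"uck's language), not a priori the bar construction. Davis--L\"uck's Lemma 4.6 works directly with this strict coend: the point is that for any $G$-simplicial set $X$ the contravariant $\Or(G)$-space $\map_G(?,X)$ is a free $\Or(G)$-CW-complex, and tensoring a free contravariant diagram against any covariant diagram of spectra is homotopy invariant in the covariant variable. Your route instead replaces the strict coend by the two-sided bar construction, which is automatically homotopy invariant; but to identify the bar construction with the coend actually used in the definition you again need cofibrancy of one of the two variables---and it is the \emph{contravariant} side $\map_G(?,X)$ that supplies it, not $F\circ\Grpd^G$ as your closing remark suggests. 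Once that freeness is invoked, your bar-construction argument and Davis--L\"uck's direct argument are interchangeable; each buys the same conclusion, yours being perhaps more transparently ``homotopical'' while theirs stays closer to the literal definition.
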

\begin{proof}
It follows from \cite{dl}*{Lemma 4.6}.
\end{proof}

\begin{lem}\label{lem:hocolim}
Let $I$ be a small category and let $F:I\times \grpd\to \spt$ be a functor. Let $E:\grpd\to\spt$ be given by $E(\Grpd)=\hocolim_{i\in I}F(i,\Grpd)$. Let $X\in \sset^G$. Then there is an isomorphism of spectra
\[\hocolim_{i\in I} H^G(X,F(i,?))\simeq H^G(X,E).\]
In particular $H^G(X,?)$ preserves (objectwise) homotopy fibration sequences and homotopy cartesian squares of spectra.
\end{lem}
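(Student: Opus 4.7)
The plan is to show that $H^G(X,?)$ commutes with homotopy colimits of functors $\grpd\to\spt$, because $H^G(X,?)$ is itself built from a (homotopy) colimit construction, namely a coend. The final assertion will then follow because in the stable setting a homotopy fibration sequence of spectra is the same as a homotopy cofibration sequence, and a homotopy cartesian square of spectra is the same as a homotopy cocartesian square; both are particular instances of small homotopy colimits.

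First I would unpack the defining coend
\[
H^G(X,F)=\int^{G/H\in\Or(G)}\map_G(G/H,X)\otimes F(\Grpd^G(G/H)).
\]
The Davis--L\"uck formalism is arranged precisely so that this coend represents a homotopy colimit in $\spt$; that homotopy invariance is exactly what underlies Lemma \ref{lem:weq}. Granting this, I would invoke the standard fact that homotopy colimits of spectra commute with one another, with coends, and with the simplicial action $Y\otimes(-)$ of $\sset$ on $\spt$ in the second variable. Exchanging $\hocolim_{i\in I}$ past the coend and past the tensor with the fixed simplicial set $\map_G(G/H,X)$ then gives
\begin{align*}
\hocolim_{i\in I} H^G(X,F(i,?)) &\simeq \int^{G/H}\map_G(G/H,X)\otimes \hocolim_{i\in I} F(i,\Grpd^G(G/H))\\
&= \int^{G/H}\map_G(G/H,X)\otimes E(\Grpd^G(G/H))\\
&= H^G(X,E).
\end{align*}

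The main technical obstacle I expect is justifying that the strict coend in the definition of $H^G(X,?)$ can be identified with the corresponding homotopy coend (for example via a two-sided bar construction, or after a cofibrant replacement of $\map_G(-,X)$ as a functor $\Or(G)^{\op}\to\sset$). Once this point is granted, the exchange of the two homotopy colimits is a formal Fubini-type computation. The ``in particular'' clause is then immediate: homotopy fibres and homotopy pullbacks of spectra are computed by small homotopy colimit diagrams (cofibres and pushouts, respectively), so applying the displayed identity to these finite diagrams shows that $H^G(X,?)$ preserves them.
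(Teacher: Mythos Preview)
Your approach is essentially the same as the paper's: exchange the homotopy colimit with the defining coend using Fubini and the fact that $Y\otimes(-)$ is a left adjoint, then deduce the ``in particular'' from stability. The one difference is that the paper treats the first assertion as a literal point-set isomorphism---since $\hocolim_{i\in I}$ is itself a coend and tensoring with a fixed simplicial set commutes with colimits---so your worry about identifying the strict coend with a homotopy coend never arises; the paper also invokes Lemma~\ref{lem:weq} explicitly for the last clause, to absorb the weak equivalences involved in replacing a homotopy fibration sequence or homotopy cartesian square by an actual cofibre or pushout diagram.
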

\begin{proof}
The last assertion follows from the first one using Lemma \ref{lem:weq} because homotopy fibration sequences and homotopy cartesian squares of spectra are homotopy colimits (they are homotopy cofibration sequences and homotopy cocartesian squares respectively). The first assertion follows from Fubini's theorem for coends and from the fact that tensoring with a simplicial set in a simplicial model category commutes with the integral sign since it is a left adjoint.
\end{proof}

\subsection{Equivariant K-theory and cyclic homology}\label{subsec:equikhc}

Let $k$ be a field of characteristic zero. Let $E:\dgcatq\to \spt$ be a Morita invariant functor.

To every scheme $\fS\in\sch$ we associate a functor $E_\fS:\grpd\to\spt$ putting
\[E_\fS(\Grpd):=E(\Perf_\fS\otq \Q[\Grpd]).\]
Because of Morita invariance, $E_\fS$ sends equivalences of categories to weak equivalences of spectra. Hence $E_\fS$ gives rise to an equivariant homology $H^G(?,E(\fS))$. Note that the assignment $\fS\mapsto E_\fS$ is contravariantly functorial, so a morphism of schemes induces a morphism between the corresponding homologies. In this way we obtain equivariant homologies $H^G(?,K(\fS))$, $H^G(?,HC(\fS))$, $H^G(?,HN(\fS))$, and $H^G(?,HP(\fS))$.

To every ring $R$ we associate a functor $E_{R}:\grpd\to\spt$ putting
\[E_{R}(\Grpd):=E({R}[\Grpd]).\]
Because of Morita invariance, $E_{R}$ sends equivalences of categories to weak equivalences of spectra. Then $E_{R}$ gives rise to an equivariant homology $H^G(?,E(R))$. Note that the assignment $R\mapsto E_{R}$ is covariantly functorial.

\begin{rem}\label{rem:agree}
Let $\fS=\Spec R$ be an affine scheme and let $\nu:R\to\Perf_\fS$ be the dg-functor described in \ref{subsec:perfect}. For every groupoid $\Grpd$ the morphism
\[R[\Grpd]=R\otq \Q[\Grpd]\overset{\nu\otimes 1}\longrightarrow \Perf_\fS\otq\Q[\Grpd]\]
is a Morita equivalence and we get a weak equivalence $E_{R}(\Grpd)\weq E_\fS(\Grpd)$ by Morita invariance. By Lemma \ref{lem:weq} we get $H^G(X,E(R))\weq H^G(X,E(\fS))$ for all $X\in \sset^G$.
\end{rem}

\begin{rem}\label{rem:affiso}
Let $\fS=\Spec R$ be an affine scheme. Let $G$ be a group and let $G/H\in \Or G$. Recall that $\hom_{\Grpd^G(G/H)}(H,H)=H$. If we consider $H$ as groupoid with only one object $*$, there is an equivalence of categories $H\to\Grpd^G(G/H)$ which sends $*$ to the coset $H\in G/H$. This equivalence induces a $k$-linear functor $R[H]\to R[\Grpd^G(G/H)]$ which is also a category equivalence, and in particular a quasi-equivalence. By Morita invariance and Remark \ref{rem:agree} we get a weak equivalence 
\begin{equation}\label{eq:affiso}
E(R[H])\weq E_\fS(\Grpd^G(G/H)).
\end{equation}
Note that this weak equivalence is natural in $E$ but not in $G/H$.
\end{rem}

\subsection{Equivariant Chern character and infinitesimal $K$-theory}\label{subsec:equiche}

Let $A$ be a (not necessarily commutative) $k$-algebra. The infinitesimal $K$-theory $K^{\inf}(A)$ is the homotopy fiber of the Chern character $K(A)\to HN(A)$. In this section we define a natural Chern character $K_\fS(\Grpd)\to HN_\fS(\Grpd)$ that coincides with the classical one when $\fS$ is affine and $\Grpd=\Grpd^G(G/H)$ (see Remark \ref{rem:affiso}).

Let $E:\dgcatq\to\spt$ be a functor. For each $\fS\in\sch$ define $E^{\aff}_\fS:\grpd\to\spt$ by $E^{\aff}_\fS(\Grpd):=E_{\Spec \Oo_\fS(\fS)}(\Grpd)$. The morphism $\fS\to\Spec \Oo_\fS(\fS)$ induces a natural transformation $E^{\aff}_\fS\to E_\fS$. Now fix a groupoid $\Grpd$. We have presheaves of spectra $E_?(\Grpd)$ and $E^{\aff}_?(\Grpd)$ on $\sch$. The natural map $E^{\aff}_?(\Grpd)\to E_?(\Grpd)$ is a Zariski-local weak equivalence because both presheaves coincide on affine schemes.

By Morita invariance we have $K_\fS^{\aff}(\Grpd)\weq K(\Oo_\fS(\fS)\otq \Q[\Grpd])$ and $HN_\fS^{\aff}(\Grpd)\weq HN(\Oo_\fS(\fS)\otq \Q[\Grpd])$ naturally in $\fS$ and in $\Grpd$. Then the Chern character for $\Z$-linear categories \cite{corel}*{8.1.6} induces a natural transformation $\ch^{\aff}:K^{\aff}_?(\Grpd)\to HN^{\aff}_?(\Grpd)$. Choose a fibrant replacement functor 
\[
\eta_F:F\mapsto F_{\zar}
\]
for the Zariski-local injective model structure in the category of presheaves of spectra on $\sch$. Consider the following diagram:
\[
\xymatrix{K_?(\Grpd)\ar[d]^{\eta_K} & K^{\aff}_?(\Grpd)\ar[l]\ar[r]^{\ch^{\aff}}\ar[d] & HN^{\aff}_?(\Grpd)\ar[r]\ar[d] & HN_?(\Grpd)\ar[d]^{\eta_{HN}} \\
K_?(\Grpd)_{\zar} & K^{\aff}_?(\Grpd)_{\zar}\ar[l]_{\theta_K}\ar[r]^{\ch^{\aff}_{\zar}} & HN^{\aff}_?(\Grpd)_{\zar}\ar[r]^{\theta_{HN}} & HN_?(\Grpd)_{\zar} \\}
\]
Note that the presheaves $K_?(\Grpd)$ and $HN_?(\Grpd)$ have Zariski descent by Theorem \ref{thm:tabuada}. The morphisms $\theta_K$, $\theta_{HN}$, $\eta_K$ and $\eta_{HN}$ are global weak equivalences beacause they are Zariski-local weak equivalences between presheaves that satisfy Zariski descent. These weak equivalences and the morphism $\ch^{\aff}_{\zar}$ define a Chern character map $\ch:K_\fS(\Grpd)\to HN_\fS(\Grpd)$. It is clear that this map is natural in $\Grpd$ and coincides with the usual Chern character in the affine case.

For $\fS\in\sch$ we define $K^{\inf}_\fS:\grpd\to\spt$ by
\begin{equation}\label{eq:kinf}
K^{\inf}_\fS(\cG):=\hofiber(\ch:K_\fS(\Grpd)\to HN_\fS(\Grpd)).\end{equation}
Because of \eqref{eq:affiso} the Chern character defined above coincides with the usual one in the affine case and there is a weak equivalence
\begin{equation}\label{eq:agreekinf}K^{\inf}_{\Spec R}(\Grpd^G(G/H))\weq K^{\inf}(R[H])\end{equation}
which is natural in $R$. The functor $K^{\inf}_\fS$ sends equivalences of categories to weak equivalences of spectra because both $K_\fS$ and $HN_\fS$ have that property. Thus $K^{\inf}_\fS$ gives rise to an equivariant homology theory $H^G(?,K^{\inf}(\fS))$. By Lemma \ref{lem:hocolim} there is a homotopy fibration sequence
\begin{equation}\label{eq:fseqkinf}H^G(X,K^{\inf}(\fS))\to H^G(X,K(\fS))\to H^G(X,HN(\fS))\end{equation}
for every $X\in \sset^G$ and every $\fS\in\sch$.

\section{Descent results for equivariant homologies}\label{sec:descequi}
\numberwithin{equation}{section}

\begin{lem}\label{lem:nisreg}
Let $X\in \sset^G$ and let $E\in\{K, HC, HN, HP, K^{\inf}\}$. Then the presheaf of spectra $H^G(X,E(?))$ has Nisnevich descent and descent for regular-blowup squares in $\sch$.
\end{lem}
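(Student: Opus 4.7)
The plan is to reduce everything, groupoid by groupoid, to the descent results already available. More precisely, I would first show that for each fixed $\Grpd\in\grpd$ the presheaf $\fS\mapsto E_\fS(\Grpd)$ on $\sch^{\op}$ has Nisnevich descent and descent for regular-blowup squares, and then transport these properties to $H^G(X,E(?))$ via the fact, recorded in Lemma \ref{lem:hocolim}, that $H^G(X,?)$ preserves objectwise homotopy cartesian squares of functors $\grpd\to\spt$.

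For $E\in\{K,HC,HN,HP\}$ the functor is Morita invariant and localizing, so the instance $\cal{A}=\Q[\Grpd]$ of Theorem \ref{thm:tabuada} (resp. Theorem \ref{thm:regularblowups}) gives Nisnevich descent (resp. regular-blowup descent) for the presheaf $\fS\mapsto E(\Perf_\fS\otq\Q[\Grpd])=E_\fS(\Grpd)$. The case $E=K^{\inf}$ needs a small extra argument: by the defining formula \eqref{eq:kinf}, $K^{\inf}_\fS(\Grpd)$ is the homotopy fiber of $\ch\colon K_\fS(\Grpd)\to HN_\fS(\Grpd)$. Given a Nisnevich or regular-blowup square in $\sch$, evaluating the Chern character produces a map between two squares of spectra, both of which are homotopy cartesian by what we have just proved. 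Since the homotopy fiber is a homotopy limit and homotopy limits commute with each other, the square of homotopy fibers -- which is precisely the square associated to $K^{\inf}_?(\Grpd)$ -- is itself homotopy cartesian. Hence $\fS\mapsto K^{\inf}_\fS(\Grpd)$ also has Nisnevich and regular-blowup descent.

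To finish, let $P$ be either the Nisnevich or the regular-blowup cd-structure and let \eqref{square} lie in $P$. The previous step shows that the square of functors $\grpd\to\spt$ determined by $E_{\fS},E_{\tilde\fS},E_{\fT},E_{\tilde\fT}$ (or their $K^{\inf}$ analogues) is objectwise homotopy cartesian. Applying Lemma \ref{lem:hocolim} to this square, the resulting square of spectra $H^G(X,E(?))$ is homotopy cartesian, which is the required descent. There is no real obstacle here; the argument is a direct assembly of Theorems \ref{thm:tabuada} and \ref{thm:regularblowups}, Lemma \ref{lem:hocolim}, and the stability of homotopy cartesian squares under homotopy fibers.
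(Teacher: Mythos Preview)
Your proof is correct and follows essentially the same approach as the paper: apply Theorems \ref{thm:tabuada} and \ref{thm:regularblowups} with $\cal{A}=\Q[\Grpd]$ to get objectwise homotopy cartesian squares of functors $\grpd\to\spt$, then invoke Lemma \ref{lem:hocolim}. The only cosmetic difference is that for $K^{\inf}$ the paper takes the homotopy fiber \emph{after} applying $H^G(X,?)$, using the fibration sequence \eqref{eq:fseqkinf}, whereas you take it before; both orders work for the same reason.
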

\begin{proof}
Let $E\in\{ K, HC, HN, HP\}$ and let \eqref{square} be a Nisnevich square (resp. a regular blowup square). Theorem \ref{thm:tabuada} (resp. Theorem \ref{thm:regularblowups}) implies that the square
\[
\xymatrix{ E_{\tilde{\fT}}(\Grpd) & E_{\tilde{\fS}}(\Grpd)\ar[l] \\
E_\fT(\Grpd)\ar[u] & E_\fS(\Grpd)\ar[l]\ar[u] \\}
\]
is homotopy cartesian for every groupoid $\Grpd$. It follows from Lemma \ref{lem:hocolim} that $H^G(X,E(?))$ sends \eqref{square} into a homotopy cartesian square of spectra.

In the case of $K^{\inf}$ the assertion follows from the corresponding statements for $K$ and $HN$ and from the homotopy fibration sequence \eqref{eq:fseqkinf}.
\end{proof}

Proposition \ref{prop:cdh} below states that $H^G(X,E(?))$ has cdh-descent on $\sch$ for $E\in\{HP, K^{\inf}\}$. We first recall some definitions and results. Let $\ass$ be the category of rings, and let $\fC\subset\ass$ be a subcategory.

\begin{defi}
A \emph{Milnor square} is a commutative square of rings
\[\xymatrix{ A\ar[r]\ar[d] & B\ar[d] \\
A/I\ar[r] & B/J}\]
in which $I$ is an ideal of $A$, $J$ is an ideal of $B$ and $f$ maps $I$ isomorphically onto $J$.  Let $F:\fC\to\spt$ be a functor. We say that $F$ satisfies \emph{excision} if it sends Milnor squares in $\fC$ to homotopy cartesian squares of spectra.
\end{defi}

The Cuntz-Quillen theorem  (\cite{cq}) establishes excision for $HP$ of $\Q$-algebras. Excision for infinitesimal $K$-theory of $\Q$-algebras was proved in \cite{kabi}.

\begin{defi}
A functor $F:\fC\to\spt$ is \emph{nilinvariant} if for every ring $R\in \fC$ and every nilpotent ideal $I$ of $R$ the map $F(R)\to F(R/I)$ is a weak equivalence.
\end{defi}

Nilinvariance for $HP$ of $\Q$-algebras was proved by Goodwillie in \cite{gooch}*{Theorem II.5.1}. Nilinvariance for $K^{\inf}$ follows from another theorem of Goodwillie \cite{goorel}*{Main Theorem}.

Let $F$ be a presheaf of spectra on $\sch$. Assume that $F$ satisfies Zariski descent. We say that $F$ satisfies \emph{excision} (resp. is \emph{nilinvariant}) if $R\mapsto F(\Spec R)$ satisfies excision (resp. is nilinvariant) on the category of commutative $k$-algebras of finite type. 

We recall the following criterion for $\cdh$-descent.

\begin{thm}[\cite{chsw}*{Theorem 3.12}]\label{thm:chsw}
Let $k$ be a field of characteristic zero. Let $F$ be a presheaf of spectra on $\sch$. Suppose that $F$ satisfies excision, nilinvariance, Nisnevich descent and descent for regular-blowup squares. Then $F$ has cdh-descent.
\end{thm}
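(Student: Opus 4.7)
The plan is to reduce $\cdh$-descent to descent for abstract blow-up squares. Since the combined $cd$-structure generating the $\cdh$-topology consists of Nisnevich squares and abstract blow-up squares, and $F$ already satisfies Nisnevich descent by hypothesis, by the general machinery of \cite{voeho} it suffices to show that $F$ sends every abstract blow-up square \eqref{square} in $\sch$ to a homotopy cartesian square of spectra. My strategy is a Noetherian induction on $\dim \fS$, using Hironaka's resolution of singularities to dominate an arbitrary abstract blow-up by a composition of regular blow-ups, and using nilinvariance and excision as bridges between the various ideal-theoretic thickenings that arise.

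First I would use nilinvariance to reduce to the case where $\fT$ and $\tilde{\fT}$ are reduced: the nilpotent ideals defining $\fT_{\red}\hookrightarrow\fT$ and $\tilde{\fT}_{\red}\hookrightarrow\tilde{\fT}$ sit in Milnor squares which, together with nilinvariance, yield weak equivalences $F(\fT)\weq F(\fT_{\red})$ and $F(\tilde{\fT})\weq F(\tilde{\fT}_{\red})$, and similarly allow one to replace $\fS$ and $\tilde{\fS}$ by reduced representatives at the relevant stages. For the inductive step, I would invoke Hironaka's resolution of singularities---this is the only place where the assumption $\car(k)=0$ enters---to produce a proper birational morphism $\fS_N\to\fS$ factoring as a sequence of regular blow-ups $\fS_i\to\fS_{i-1}$ along smooth centers $Z_i\subset\fS_{i-1}$ of dimension strictly less than $\dim\fS$, and such that $\fS_N$ dominates $\tilde{\fS}$ over $\fS$. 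Each step contributes a homotopy cartesian square by the regular-blowup descent hypothesis, so a telescoping argument reduces descent for the tower to descent statements supported over the successive centers $Z_i$, where $\dim Z_i<\dim\fS$ and the inductive hypothesis applies.

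The main obstacle is the comparison between the scheme-theoretic center $\fT$ of the given abstract blow-up \eqref{square} and the smooth centers $Z_i$ produced by the resolution: even after reducing to reduced schemes these need not coincide set-theoretically with $\fT$ or its iterated preimages, and their defining ideals differ by finitely presented pieces that fit into Milnor squares over $\fS$. Excision plays the bridging role here, ensuring that $F$ cannot distinguish these ideal thickenings and so allowing the resolution tower to be collapsed back into the single homotopy cartesian square associated to \eqref{square}. Organizing these Milnor squares coherently with the inductive hypothesis and with the regular-blowup descent---so that the peeled-off pieces assemble into the desired descent statement---is the combinatorial heart of the argument, and is precisely where all four hypotheses on $F$ (Nisnevich descent, regular-blowup descent, excision, nilinvariance) must simultaneously intervene.
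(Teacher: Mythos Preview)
The paper does not give a proof of this statement: it is quoted verbatim as \cite{chsw}*{Theorem 3.12} and used as a black box. So there is no proof in the paper to compare your proposal against.

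That said, your outline is broadly the strategy of the original proof in \cite{chsw} (which in turn follows Haesemeyer): reduce $\cdh$-descent to descent for abstract blow-up squares via the cd-structure formalism, then run a Noetherian induction using Hironaka's resolution in characteristic zero to replace abstract blow-ups by towers of blow-ups along regular centers, invoking nilinvariance to pass to reduced structures and excision to compare the various ideal-theoretic thickenings that arise. So as a high-level plan your proposal is on target.

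Two cautions about the sketch as written. First, the claim that Hironaka produces a resolution $\fS_N\to\fS$ which \emph{dominates the given} $\tilde{\fS}$ over $\fS$ is not how the argument in \cite{chsw} proceeds, and it is not clear you can arrange this directly; rather, one resolves both $\fS$ and the closed center, and uses the inductive hypothesis together with regular-blow-up descent to control the discrepancy. Second, your description of the role of excision---that the centers ``differ by finitely presented pieces that fit into Milnor squares''---is too vague to be a proof step: in \cite{chsw} excision is used in a specific way, together with nilinvariance, to pass between a closed subscheme and its infinitesimal thickenings, and this has to be organized carefully with the Nisnevich descent hypothesis (to localize) before the induction closes. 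You correctly flag this as the heart of the argument, but what you have written is a plan rather than a proof; for the details one really does need to consult \cite{chsw}.
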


\begin{prop}\label{prop:cdh}
Let $k$ be a field of characteristic zero. Let $X\in\sset^G$ and let $E\in\{HP, K^{\inf}\}$. Then the presheaf of spectra $H^G(X,E(?))$ has cdh-descent on $\sch$.
\end{prop}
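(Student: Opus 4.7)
The plan is to apply Theorem \ref{thm:chsw} to the presheaf $F = H^G(X, E(?))$ for each $E \in \{HP, K^{\inf}\}$. Two of the four required hypotheses are already in hand: Nisnevich descent and descent for regular-blowup squares are both furnished by Lemma \ref{lem:nisreg}. In particular this implies Zariski descent, which is needed even to give meaning to the notions of excision and nilinvariance for presheaves on $\sch$ as defined in the excerpt. So the real work is to promote the known excision and nilinvariance of $HP$ and $K^{\inf}$ on $\Q$-algebras to the equivariant presheaf $\fS \mapsto H^G(X, E(\fS))$.

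For excision, I would start with a Milnor square of commutative $k$-algebras of finite type and fix a groupoid $\Grpd$. By Remark \ref{rem:agree}, the spectrum $E_{\Spec R}(\Grpd)$ is naturally weak equivalent to $E(R \otimes_\Q \Q[\Grpd])$. Because $\Q[\Grpd]$ is flat over $\Q$, tensoring the Milnor square termwise with $\Q[\Grpd]$ produces a Milnor square of $\Q$-algebras: the ideal $I$ becomes the ideal $I \otimes \Q[\Grpd]$, which maps isomorphically to $J \otimes \Q[\Grpd]$, and flatness ensures that quotients are preserved. The Cuntz-Quillen theorem \cite{cq} (for $HP$) and the excision theorem of \cite{kabi} (for $K^{\inf}$) then send this square to a homotopy cartesian square of spectra. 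Hence the functor $\Grpd \mapsto E_?(\Grpd)$ takes the original Milnor square into a pointwise homotopy cartesian diagram, and applying Lemma \ref{lem:hocolim}, which states that $H^G(X, ?)$ preserves homotopy cartesian squares of spectra, yields excision for $H^G(X, E(?))$.

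For nilinvariance, suppose $I \subset R$ is a nilpotent ideal with $I^n = 0$. Since $(I \otimes_\Q \Q[\Grpd])^n \subseteq I^n \otimes \Q[\Grpd] = 0$, the induced ideal is nilpotent in $R \otimes \Q[\Grpd]$. Goodwillie's nilinvariance for $HP$ of $\Q$-algebras \cite{gooch} and the corresponding nilinvariance for $K^{\inf}$ (a consequence of \cite{goorel}, as already invoked in the excerpt) give that the map $E(R \otimes \Q[\Grpd]) \to E((R/I) \otimes \Q[\Grpd])$ is a weak equivalence. Combining this with Remark \ref{rem:agree} and Lemma \ref{lem:weq} produces the required weak equivalence $H^G(X, E(\Spec R)) \weq H^G(X, E(\Spec(R/I)))$. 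With all four hypotheses of Theorem \ref{thm:chsw} verified, the presheaf $H^G(X, E(?))$ has $\cdh$-descent.

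The main obstacle here is not conceptual but bookkeeping: verifying cleanly that tensoring with $\Q[\Grpd]$ over $\Q$ preserves both Milnor-square structure and nilpotency (this relies crucially on flatness of $\Q[\Grpd]$, which in turn uses $\car(k) = 0$ implicitly so that we are genuinely working over $\Q$), and tracking the naturality of the identification $E_R(\Grpd) \simeq E_{\Spec R}(\Grpd)$ of Remark \ref{rem:agree} well enough to transfer the pointwise homotopical properties of $E$ on $\Q$-algebras into the requisite statements for the presheaf on $\sch$. Once those naturalities are in place, Lemmas \ref{lem:weq} and \ref{lem:hocolim} do the rest mechanically.
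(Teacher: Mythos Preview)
Your strategy is the paper's strategy: verify the four hypotheses of Theorem~\ref{thm:chsw}, with Nisnevich and regular-blowup descent coming from Lemma~\ref{lem:nisreg}, and excision and nilinvariance coming from the corresponding ring-level results for $HP$ and $K^{\inf}$ pushed through $H^G(X,-)$ via Lemma~\ref{lem:hocolim}. However, there is a technical slip in how you carry out the reduction.

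For a general groupoid $\Grpd$, the object $R\otimes_\Q \Q[\Grpd]$ is a $\Q$-linear \emph{category} (with object set $\ob\Grpd$), not a $\Q$-algebra. So your sentence ``tensoring the Milnor square termwise with $\Q[\Grpd]$ produces a Milnor square of $\Q$-algebras'' is not correct as written, and the theorems you cite---\cite{cq}, \cite{kabi}, \cite{gooch}, \cite{goorel}---are stated for rings, not for linear categories. The same objection applies to your nilinvariance argument. The paper avoids this by never working with an arbitrary $\Grpd$: it fixes $G/H\in\Or(G)$ and uses the category equivalence $H\to\Grpd^G(G/H)$ recorded in Remark~\ref{rem:affiso} (and its $K^{\inf}$ analogue \eqref{eq:agreekinf}) to replace $E_{\Spec R}(\Grpd^G(G/H))$ by $E(R[H])$, where $R[H]$ is an honest ring. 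The ring-level Milnor square $R[H]\to S[H]$, $I[H]\cong J[H]$, etc., is then exactly what the cited theorems require. Since homotopy-cartesianness is a property checked pointwise, and since $H^G(X,-)$ only depends on the values at the $\Grpd^G(G/H)$, this is enough to feed into Lemma~\ref{lem:hocolim}.

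A second, smaller point: for $E=K^{\inf}$ you invoke Remark~\ref{rem:agree}, but that remark is stated for functors of the form $E(\Perf_\fS\otimes_\Q\Q[\Grpd])$ with $E$ Morita invariant, whereas $K^{\inf}_\fS$ is \emph{defined} in \eqref{eq:kinf} as the homotopy fiber of a Chern character built via a Zariski fibrant replacement, not as such an $E$. The affine identification you need is precisely \eqref{eq:agreekinf}, which the paper establishes separately for this reason and then cites alongside \eqref{eq:affiso} in the proof.
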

\begin{proof}
The result will follow from Theorem \ref{thm:chsw}. The presheaf $H^G(X,E(?))$ has Nisnevich descent and descent for regular-blowup squares by Lemma \ref{lem:nisreg}. Let $f:R\to S$ be a morphism of commutative $k$-algebras and let $I$ be an ideal of $R$ which is mapped isomorphically onto an ideal $J$ of $S$. Let $G/H\in\Or G$. Then $f[H]:R[H]\to S[H]$ is a $k$-algebra morphism and $I[H]$ is an ideal of $R[H]$ which is mapped isomorphically onto the ideal $J[H]$ of $S[H]$. The excision results of \cite{cq} and \cite{kabi} imply that the square
\[\xymatrix{E(R[H])\ar[r]\ar[d] & E(S[H])\ar[d] \\
E((R/I)[H])\ar[r] & E((S/J)[H])}\]
is homotopy cartesian. The equivalences \eqref{eq:affiso} and \eqref{eq:agreekinf} imply that the following square is homotopy cartesian:
\[\xymatrix{E_{\Spec R}(\Grpd^G(G/H))\ar[r]\ar[d] & E_{\Spec S}(\Grpd^G(G/H))\ar[d] \\
E_{\Spec R/I}(\Grpd^G(G/H))\ar[r] & E_{\Spec S/J}(\Grpd^G(G/H))}\]
Excision for $H^G(X,E(?))$ follows from Lemma \ref{lem:hocolim}. Nilinvariance for $H^G(X,E(?))$ is proved in a similar way, using the equivalences \eqref{eq:affiso} and \eqref{eq:agreekinf} and the nilinvariance results for $HP$ and $K^{\inf}$ due to Goodwillie and cited above (\cite{gooch}*{Theorem II.5.1} and \cite{goorel}*{Main Theorem}).
\end{proof}

Choose a fibrant replacement functor $F\mapsto F_{\cdh}$ for the cdh-local injective model structure in the category of presheaves of spectra on $\sch$. For a presheaf $F$ let $\Fib F$ be the homotopy fiber of the natural morphism $F\to F_{\cdh}$. Then for every $\fS\in\sch$ we have a homotopy fibration sequence

\begin{equation}\label{seq:fib}
\Fib F(\fS)\to F(\fS)\to F(\fS)_{\cdh}.
\end{equation}

\begin{thm}[cf. \cite{chwvorst}*{Theorem 1.6}]\label{thm:hgfc}
Let $G$ be a group and $X$ a $G$-space. Also let $k$ be a field of characteristic zero  and $\fS\in\sch$. Then there is a homotopy fibration sequence
\[
\Fib H^G(X,HC(\fS))[-1]\to H^G(X,K(\fS))\to H^G(X,K(\fS))_\cdh .
\]
\end{thm}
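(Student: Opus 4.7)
The plan is to combine two standard fibration sequences of Morita invariant functors on dg-categories with the cdh-descent vanishing results of Proposition~\ref{prop:cdh}.

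First, by the very definition \eqref{eq:kinf}, we have a fibration sequence of functors from $\grpd$ to $\spt$, natural in $\fS \in \sch$:
\[
K^{\inf}_\fS \longrightarrow K_\fS \longrightarrow HN_\fS .
\]
Second, the Connes SBI sequence for dg-categories provides a natural fibration sequence
\[
HC_\fS[-1] \longrightarrow HN_\fS \longrightarrow HP_\fS .
\]
Applying the construction $H^G(X, -)$ and invoking Lemma~\ref{lem:hocolim} produces corresponding fibration sequences of presheaves of spectra on $\sch$.

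Next, I would apply the functor $\Fib(-) = \hofi(- \to (-)_\cdh)$ to each of these fibration sequences. Since the cdh-local injective model structure on $\spt^{\sch^{\op}}$ is stable, cdh-fibrant replacement preserves fibration sequences, and a standard $3 \times 3$ diagram argument shows that $\Fib$ itself sends fibration sequences to fibration sequences.

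Proposition~\ref{prop:cdh} then supplies the crucial vanishing: since both $H^G(X, K^{\inf}(?))$ and $H^G(X, HP(?))$ satisfy cdh-descent, the spectra $\Fib H^G(X, K^{\inf}(\fS))$ and $\Fib H^G(X, HP(\fS))$ are contractible. Applying $\Fib$ to the first fibration sequence and using contractibility of its leftmost term yields a weak equivalence $\Fib H^G(X, K(\fS)) \weq \Fib H^G(X, HN(\fS))$; applying $\Fib$ to the second and using contractibility of its rightmost term yields $\Fib H^G(X, HC(\fS))[-1] \weq \Fib H^G(X, HN(\fS))$. Splicing these weak equivalences with the defining fibration sequence $\Fib H^G(X, K(\fS)) \to H^G(X, K(\fS)) \to H^G(X, K(\fS))_\cdh$ gives the statement. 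The main technical point is the stability-based claim that $\Fib$ commutes with fibration sequences; once granted, the rest is a clean diagram chase assembling the two inputs above.
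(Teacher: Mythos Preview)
Your proposal is correct and is precisely the argument the paper invokes: the paper's proof consists of the single sentence ``It follows from Proposition~\ref{prop:cdh} by the argument of \cite{chwvorst}*{Theorem 1.6},'' and what you have written is exactly that argument spelled out. The use of the SBI sequence together with cdh-descent for $K^{\inf}$ and $HP$ to identify $\Fib H^G(X,K(\fS))$ with $\Fib H^G(X,HC(\fS))[-1]$ is the intended route.
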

\begin{proof} It follows from Proposition \ref{prop:cdh} by the argument of \cite{chwvorst}*{Theorem 1.6}. 
\end{proof}

\section{Isomorphism conjectures}\label{sec:ic}

Recall from the Introduction the definition of the strong isomorphism conjecture for a cuadruple $(G,\Fam,E, R)$. By a result of L\"{u}ck and Reich \cite{lrdet}*{Theorem 1.7} the strong isomorphism conjecture with coefficients in $HC(R)$ holds for any group $G$ and any family $\Fam$ containing all cyclic subgroups of $G$. We shall prove a variant of L\"uck-Reich's result with coefficients in a scheme. We need some notation. Let $k$ be a field of characteristic zero and let $\fS\in\sch$. Also let $G$ be a group and $f:X\to Y$ an equivariant map of $G$-spaces. Then $f$ induces a map of homotopy fibration sequences
\begin{equation}\label{map:hcseq}
\xymatrix{
\Fib H^G(X,HC(\fS))\ar[r]\ar[d] & H^G(X,HC(\fS))\ar[d]\ar[r]& H^G(X,HC(\fS))_{\cdh}\ar[d] \\
\Fib H^G(Y,HC(\fS))\ar[r] & H^G(Y,HC(\fS))\ar[r] & H^G(Y,HC(\fS))_{\cdh} \\
}
\end{equation}

\begin{prop}\label{prop:ichc}
Let $\Fam$ be a family of subgroups containing all the cyclic subgroups of $G$. Assume that $f:X\to Y$ is a $(G,\Fam)$-equivalence. Then the vertical maps in \eqref{map:hcseq} are weak equivalences. 
\end{prop}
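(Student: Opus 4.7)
The strategy is to reduce the proposition to the affine case, where the cited L\"uck--Reich theorem applies, via Zariski descent in the $\fS$ variable.

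\emph{Reduction to the middle column.} First I show that the middle vertical map of \eqref{map:hcseq} is a weak equivalence for every $\fS \in \sch$; the other two columns then follow for free. Indeed, the right-hand column is obtained from the middle column by applying the functorial $\cdh$-local fibrant replacement $(-)_\cdh$, which preserves objectwise weak equivalences; and the left-hand column follows by the five lemma applied to the long exact sequences of homotopy groups of the two rows, once the middle and right columns are known.

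\emph{The middle column via Zariski descent.} Consider the morphism of presheaves of spectra $\varphi = H^G(f, HC(?))$ on $\sch$. By Lemma \ref{lem:nisreg}, both $H^G(X, HC(?))$ and $H^G(Y, HC(?))$ satisfy Nisnevich and hence Zariski descent. On an affine $\fS = \Spec R$, Remark \ref{rem:agree} provides natural weak equivalences $H^G(Z, HC(\fS)) \weq H^G(Z, HC(R))$ for $Z \in \{X, Y\}$, and the L\"uck--Reich theorem (\cite{lrdet}*{Theorem 1.7}) cited just before the proposition asserts that $H^G(f, HC(R))$ is a weak equivalence whenever $\Fam$ contains all cyclic subgroups of $G$, which is our hypothesis. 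Thus $\varphi(\fS)$ is a weak equivalence for every affine $\fS \in \sch$. Since affine opens form a basis for the Zariski topology and Zariski stalks are filtered colimits of sections over affine opens, $\varphi$ is a Zariski-local weak equivalence. Applying the Zariski-local fibrant replacement, $\varphi_\zar$ becomes a weak equivalence between Zariski-fibrant presheaves and therefore a global weak equivalence; combining this with the descent characterization of Subsection \ref{subsec:prsh} (both presheaves are connected to their Zariski fibrant replacements by global weak equivalences) and two-out-of-three, one concludes that $\varphi$ itself is an objectwise weak equivalence on all of $\sch$.

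\emph{Main obstacle.} The only non-routine step is the descent-based reduction from all schemes to the affine case in the $\fS$-variable; the rest is a formal diagram chase. The essential descent input is Nisnevich descent for $HC$ with scheme coefficients in groupoid-indexed dg-categories (Lemma \ref{lem:nisreg}, resting on Theorem \ref{thm:tabuada}), which is already established, so no additional technical obstacles should arise. A minor subtlety worth checking carefully is that the Zariski-local weak equivalence condition truly follows from weak equivalences on affines, which uses that Zariski stalks on a finite-type $k$-scheme are computed as filtered colimits of sections over affine neighborhoods and that filtered colimits of spectra preserve stable weak equivalences.
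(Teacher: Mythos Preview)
Your proposal is correct and follows essentially the same route as the paper's own proof: establish the middle vertical map first by knowing the affine case (L\"uck--Reich; the paper cites \cite{corel}*{Proposition 7.6} for the same fact) and then bootstrapping via Zariski descent from Lemma \ref{lem:nisreg}, deduce the right column by passing to $\cdh$-fibrant replacements, and conclude the left column by the five lemma. The only cosmetic difference is the order of exposition and your slightly more explicit stalkwise justification that affine weak equivalences give a Zariski-local weak equivalence.
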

\begin{proof}
By \cite{corel}*{Proposition 7.6} the vertical map in the middle of \eqref{map:hcseq} is a weak equivalence for affine $\fS$. Hence 
\begin{equation}\label{map:xyhc}
H^G(X,HC(?))\to H^G(Y,HC(?))
\end{equation}
is a Zariski local weak equivalence; by Lemma \ref{lem:nisreg} it is a global weak equivalence. This shows that the vertical map in the middle of \eqref{map:hcseq} is a weak equivalence. Similarly, because the map \eqref{map:xyhc} is a Zariski local equivalence, it is also a $\cdh$-local equivalence.
Hence it induces a global equivalence $H^G(X,HC(?))_\cdh\to H^G(Y,HC(?))_\cdh$. This proves that the rightmost vertical map of \eqref{map:hcseq} is a weak equivalence. By the five lemma, it follows that also the leftmost vertical map is a weak equivalence. This completes the proof. 
\end{proof}

We are now ready to prove our main theorem.

\begin{thm}\label{thm:main}
Let $\Fam$ be a family of subgroups of $G$ that contains all the cyclic subgroups. Let $k$ be a a field of characteristic zero and let $f:X\to Y$ be a $(G,\Fam)$-equivalence. Suppose that $H^G(f,K(R))$ is a weak equivalence for every commutative smooth $k$-algebra $R$. Then $H^G(f,K(R))$ is a weak equivalence for every commutative $k$-algebra $R$. In particular, if the (strong) isomorphism conjecture for $(G,\Fam, K, R)$ holds for every commutative smooth $k$-algebra $R$, then it holds for every commutative $k$-algebra $R$.
\end{thm}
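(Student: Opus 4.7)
The plan is to assemble the results of the preceding sections exactly along the lines of the proof sketch in the Introduction. First I would reduce to the case where $R$ is a finitely generated commutative $k$-algebra. Every commutative $k$-algebra is a filtering colimit of its finitely generated subalgebras, and both non-connective $K$-theory and the coend defining $H^G(X,K(?))$ commute with filtering colimits in the ring variable (via the equivalence \eqref{eq:affiso} of Remark \ref{rem:affiso}). Hence it suffices to show that $H^G(f,K(\fS))$ is a weak equivalence for every affine $\fS\in\sch$. I would actually prove the stronger statement that $H^G(f,K(\fS))$ is a weak equivalence for every $\fS\in\sch$.

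Apply the diagram \eqref{intro:mapseq} to $f:X\to Y$. The rightmost vertical map is a weak equivalence: by Remark \ref{rem:agree} the hypothesis gives that $H^G(f,K(\fS))$ is a weak equivalence for every smooth affine $\fS$, and since $\car(k)=0$ every scheme in $\sch$ is $\cdh$-locally smooth, by resolution of singularities. Hence $H^G(f,K(?))$ is a $\cdh$-local weak equivalence of presheaves on $\sch$, and the induced map on $\cdh$-fibrant replacements is therefore a global weak equivalence. For the leftmost vertical map, Theorem \ref{thm:hgfc} supplies a weak equivalence
\[
\Fib H^G(?,K(\fS))\weq \Fib H^G(?,HC(\fS))[-1]
\]
which is natural in the $G$-space variable, and Proposition \ref{prop:ichc} --- precisely the place where the hypothesis that $\Fam$ contains all cyclic subgroups of $G$ is used --- states that $\Fib H^G(f,HC(\fS))$ is a weak equivalence. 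Hence $\Fib H^G(f,K(\fS))$ is a weak equivalence.

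The five lemma, applied to the map of homotopy fibration sequences \eqref{intro:mapseq}, then yields that the middle vertical map $H^G(f,K(\fS))$ is a weak equivalence for every $\fS\in\sch$, and in particular for $\fS=\Spec R$ with $R$ finitely generated, which combined with the reduction above completes the proof. The genuinely hard work --- $\cdh$-descent for $K^{\inf}$ and $HP$ (Proposition \ref{prop:cdh}) feeding into Theorem \ref{thm:hgfc}, and the L\"uck--Reich isomorphism result upgraded to scheme coefficients in Proposition \ref{prop:ichc} --- has already been carried out in the preceding sections; at the level of this theorem the only remaining substance is the five-lemma assembly and the filtering-colimit reduction, so no new obstacle arises here.
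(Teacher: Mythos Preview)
Your proposal is correct and follows essentially the same route as the paper's proof: invoke Proposition \ref{prop:ichc} for the $\Fib HC$ piece, use the smooth hypothesis plus resolution of singularities to see that $H^G(f,K(?))_{\cdh}$ is a global weak equivalence, combine these via Theorem \ref{thm:hgfc} and the five lemma, and finish with the filtering-colimit reduction. The only cosmetic difference is that the paper applies the fibration sequence of Theorem \ref{thm:hgfc} directly rather than first passing through diagram \eqref{intro:mapseq} and then identifying $\Fib K$ with $\Fib HC[-1]$, and it places the filtering-colimit step at the end rather than the beginning.
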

\begin{proof}
Because $\Fam\supset\cyc$, the map $\Fib H^G(f,HC(\fS))$ is a weak equivalence for all $\fS\in\sch$, by Proposition \ref{prop:ichc}.
Because $H^G(f,K(R))$ is a weak equivalence for smooth $R$, $H^G(f,K(?))$ is a $\cdh$-local weak equivalence. Hence $H^G(f,K(\fS))_\cdh$ is a weak equivalence
for every $\fS\in\sch$. By Theorem \ref{thm:hgfc} and what we have just proven, $H^G(f,K(\fS))$ is a weak equivalence for every $\fS\in\sch$. In particular  
$H^G(f,K(R))$ is a weak equivalence for $R$ commutative and of finite type over $k$. Because $H^G(f,K(?))$ commutes with filtering colimits up to homotopy, it follows that $H^G(f,K(R))$ is a weak equivalence for every commutative $k$-algebra $R$. 
\end{proof}

\begin{coro}\label{coro:main}
Let $G$ be a group. If $G$ satisfies the $K$-theoretic Farrell-Jones conjecture with coefficients in every commutative smooth $\Q$-algebra $R$ then it also satisfies the Farrell-Jones conjecture with coefficients in any commutative $\Q$-algebra.
\end{coro}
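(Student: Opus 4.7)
The plan is to derive this corollary as a direct application of Theorem~\ref{thm:main} specialized to $k = \Q$ and to the family $\Fam = \vcyc$ of virtually cyclic subgroups of $G$. Since every cyclic group is virtually cyclic, the family $\vcyc$ satisfies the hypothesis of Theorem~\ref{thm:main} that $\Fam$ contain all the cyclic subgroups of $G$, so this specialization is legitimate.

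First I would unwind the statement of the Farrell-Jones conjecture in the language of equivariant homology used in the paper. By definition, the $K$-theoretic Farrell-Jones conjecture for $G$ with coefficients in $R$ is the isomorphism conjecture for the quadruple $(G, \vcyc, K, R)$. Concretely, this asserts that if $p\colon \cE(G, \vcyc) \fibeq \pt = G/G$ is a $(G,\vcyc)$-cofibrant replacement of the point, then the assembly map \eqref{intro:assem} is an isomorphism; equivalently, using the identification $H_*^G(G/G, K(R)) = K_*(R[G])$ from \eqref{intro:cross}, that the induced map $H^G(p, K(R))$ is a weak equivalence of spectra. Note that the map $p$ is, in particular, a $(G,\vcyc)$-equivalence.

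Now I would invoke the ``in particular'' clause of Theorem~\ref{thm:main} with $\Fam = \vcyc$ and $k = \Q$. Its hypothesis---that the isomorphism conjecture for $(G, \vcyc, K, R)$ holds for every commutative smooth $\Q$-algebra $R$---is precisely the hypothesis of the corollary, and its conclusion---that the isomorphism conjecture for $(G, \vcyc, K, R)$ holds for every commutative $\Q$-algebra $R$---is precisely its conclusion. Since the corollary is a direct specialization of Theorem~\ref{thm:main}, there is no substantive obstacle; the only step is translating the name ``Farrell-Jones conjecture'' into the isomorphism conjecture for $\vcyc$ and noting that $\vcyc$ contains all cyclic subgroups of $G$.
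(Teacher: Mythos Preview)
Your proposal is correct and matches the paper's treatment: the corollary is stated immediately after Theorem~\ref{thm:main} without proof, being an obvious specialization to $k=\Q$ and $\Fam=\vcyc$, and your observation that $\vcyc$ contains all cyclic subgroups is exactly the verification needed.
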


\begin{bibdiv}
\begin{biblist}

\bib{balsch}{article}{
   author={Balmer, Paul},
   author={Schlichting, Marco},
   title={Idempotent completion of triangulated categories},
   journal={J. Algebra},
   volume={236},
   date={2001},
   number={2},
   pages={819--834},
   issn={0021-8693},
   review={\MR{1813503 (2002a:18013)}},
   doi={10.1006/jabr.2000.8529},
}

\bib{cargo}{article}{
   author={Carlsson, Gunnar},
	 author={Goldfarb, Boris},
   title={Algebraic K-theory of geometric groups},
   eprint={arXiv:1305.3349v2}, }

\bib{kabi}{article}{
   author={Corti{\~n}as, Guillermo},
   title={The obstruction to excision in $K$-theory and in cyclic homology},
   journal={Invent. Math.},
   volume={164},
   date={2006},
   number={1},
   pages={143--173},
}

\bib{corel}{article}{
   author={Corti{\~n}as, Guillermo},
   author={Ellis, Eugenia},
   title={Isomorphism conjectures with proper coefficients},
   journal={J. Pure Appl. Algebra},
   volume={218},
   date={2014},
   number={7},
   pages={1224--1263},
   issn={0022-4049},
   review={\MR{3168494}},
   doi={10.1016/j.jpaa.2013.11.016},
}

\bib{chsw}{article}{
   author={Corti{\~n}as, Guillermo},
   author={Haesemeyer, Christian},
   author={Schlichting, Marco},
   author={Weibel, Charles},
   title={Cyclic homology, cdh-cohomology and negative $K$-theory},
   journal={Ann. of Math. (2)},
   volume={167},
   date={2008},
   number={2},
   pages={549--573},
}
\bib{chwvorst}{article}{
   author={Corti{\~n}as, G.},
   author={Haesemeyer, C.},
   author={Weibel, C.},
   title={$K$-regularity, $cdh$-fibrant Hochschild homology, and a
   conjecture of Vorst},
   journal={J. Amer. Math. Soc.},
   volume={21},
   date={2008},
   number={2},
   pages={547--561},
   issn={0894-0347 begin_of_the_skype_highlighting FREE 0894-0347 end_of_the_skype_highlighting},
   review={\MR{2373359 (2008k:19002)}},
   doi={10.1090/S0894-0347-07-00571-1},
}

\bib{cq}{article}{
   author={Cuntz, Joachim},
   author={Quillen, Daniel},
   title={Excision in bivariant periodic cyclic cohomology},
   journal={Invent. Math.},
   volume={127},
   date={1997},
   number={1},
   pages={67--98},
}

\bib{dl}{article}{
   author={Davis, James F.},
   author={L{\"u}ck, Wolfgang},
   title={Spaces over a category and assembly maps in isomorphism
   conjectures in $K$- and $L$-theory},
   journal={$K$-Theory},
   volume={15},
   date={1998},
   number={3},
   pages={201--252},
}

\bib{drin}{article}{
   author={Drinfeld, Vladimir},
   title={DG quotients of DG categories},
   journal={J. Algebra},
   volume={272},
   date={2004},
   number={2},
   pages={643--691},
}

\bib{gooch}{article}{
   author={Goodwillie, Thomas G.},
   title={Cyclic homology, derivations, and the free loopspace},
   journal={Topology},
   volume={24},
   date={1985},
   number={2},
   pages={187--215},
}

\bib{goorel}{article}{
   author={Goodwillie, Thomas G.},
   title={Relative algebraic $K$-theory and cyclic homology},
   journal={Ann. of Math. (2)},
   volume={124},
   date={1986},
   number={2},
   pages={347--402},
}

\bib{jarget}{book}{
   author={Jardine, J. F.},
   title={Generalized \'etale cohomology theories},
   series={Progress in Mathematics},
   volume={146},
   publisher={Birkh\"auser Verlag, Basel},
   date={1997},
   pages={x+317},
   isbn={3-7643-5494-1},
   review={\MR{1437604 (98c:55013)}},
   doi={10.1007/978-3-0348-0066-2},
}

\bib{kelicm}{article}{
   author={Keller, Bernhard},
   title={On differential graded categories},
   conference={
      title={International Congress of Mathematicians. Vol. II},
   },
   book={
      publisher={Eur. Math. Soc., Z\"urich},
   },
   date={2006},
   pages={151--190},
}

\bib{kelex}{article}{
   author={Keller, Bernhard},
   title={On the cyclic homology of exact categories},
   journal={J. Pure Appl. Algebra},
   volume={136},
   date={1999},
   number={1},
   pages={1--56},
}

\bib{kelly}{article}{
   author={Kelly, G. M.},
   title={Basic concepts of enriched category theory},
   note={Reprint of the 1982 original [Cambridge Univ. Press, Cambridge;
   MR0651714]},
   journal={Repr. Theory Appl. Categ.},
   number={10},
   date={2005},
   pages={vi+137},
}

\bib{lrdet}{article}{
   author={L{\"u}ck, Wolfgang},
   author={Reich, Holger},
   title={Detecting $K$-theory by cyclic homology},
   journal={Proc. London Math. Soc. (3)},
   volume={93},
   date={2006},
   number={3},
   pages={593--634},
}

\bib{lr}{article}{
   author={L{\"u}ck, Wolfgang},
   author={Reich, Holger},
   title={The Baum-Connes and the Farrell-Jones conjectures in $K$- and $L$-theory},
   conference={
      title={Handbook of $K$-theory. Vol. 1, 2},
   },
   book={
      publisher={Springer},
      place={Berlin},
   },
   date={2005},
   pages={703--842},
}

\bib{dgperf}{article}{
   author={Rodr\'{i}guez Cirone, Emanuel},
   title={A strictly-functorial and small dg-enhancement of the derived category of perfect complexes},
   eprint={arXiv:1502.06573v1}, }

\bib{marcoh}{article}{
   author={Schlichting, Marco},
   title={Higher algebraic $K$-theory},
   conference={
      title={Topics in algebraic and topological $K$-theory},
   },
   book={
      series={Lecture Notes in Math.},
      volume={2008},
      publisher={Springer},
      place={Berlin},
   },
   date={2011},
   pages={167--241},
}

\bib{tab}{article}{
   author={Tabuada, Gon{\c{c}}alo},
   title={$E_n$-regularity implies $E_{n-1}$-regularity},
   eprint={arXiv:1212.1112}, }

\bib{tabuadainvariants}{article}{
   author={Tabuada, Gon{\c{c}}alo},
   title={Invariants additifs de DG-cat\'egories},
   language={French},
   journal={Int. Math. Res. Not.},
   date={2005},
   number={53},
   pages={3309--3339},
   issn={1073-7928},
   review={\MR{2196100 (2006k:18018)}},
   doi={10.1155/IMRN.2005.3309},
}

\bib{tt}{article}{
   author={Thomason, R. W.},
   author={Trobaugh, Thomas},
   title={Higher algebraic $K$-theory of schemes and of derived categories},
   conference={
      title={The Grothendieck Festschrift, Vol.\ III},
   },
   book={
      series={Progr. Math.},
      volume={88},
      publisher={Birkh\"auser Boston},
      place={Boston, MA},
   },
   date={1990},
   pages={247--435},
   review={\MR{1106918 (92f:19001)}},
}

\bib{voeho}{article}{
   author={Voevodsky, Vladimir},
   title={Homotopy theory of simplicial sheaves in completely decomposable topologies},
   journal={J. Pure Appl. Algebra},
   volume={214},
   date={2010},
   number={8},
   pages={1384--1398},
}

\bib{unst}{article}{
   author={Voevodsky, Vladimir},
   title={Unstable motivic homotopy categories in Nisnevich and
   cdh-topologies},
   journal={J. Pure Appl. Algebra},
   volume={214},
   date={2010},
   number={8},
   pages={1399--1406},
   issn={0022-4049},
   review={\MR{2593671 (2011e:14041)}},
   doi={10.1016/j.jpaa.2009.11.005},
}

\end{biblist}
\end{bibdiv}

\end{document}